\newcommand{\menge}[2]{\big\{{#1} \;|\; {#2}\big\}} 
\newcommand{\Pair}[2]{{\big\langle{{#1},{#2}}\big\rangle}}
\newcommand{\emp}{\ensuremath{{\varnothing}}}
\newcommand{\scal}[2]{\left\langle{#1}\mid {#2} \right\rangle}
\newcommand{\vuo}{\ensuremath{\mbox{\footnotesize$\square$}}}
\newcommand{\YY}{\ensuremath{\mathcal Y}}
\newcommand{\XX}{\ensuremath{\mathcal X}}
\newcommand{\pair}[2]{{\langle{{#1},{#2}}\rangle}}
\newcommand{\EE}{\ensuremath{\mathsf{E}}}
\newcommand{\RR}{\ensuremath{\mathbb R}}
\newcommand{\RX}{\ensuremath{\,\left]-\infty,+\infty\right]}}
\newcommand{\NN}{\ensuremath{\mathbb N}}
\newcommand{\dom}{\ensuremath{\operatorname{dom}}}
\newcommand{\prox}{\ensuremath{\operatorname{prox}}}
\newcommand{\inte}{\ensuremath{\operatorname{int}}}
\newcommand{\argmin}{\ensuremath{\operatorname{argmin}}}
\newcommand{\xx}{\ensuremath{\mathsf{x}}}
\newcommand{\yy}{\ensuremath{\mathsf{y}}}
\newcommand{\rr}{r}
\newcommand{\E}{\ensuremath{\mathsf{E}}}
\newcommand{\pinf}{\ensuremath{+\infty}}
\newtheorem{theorem}{Theorem}[section]
\newtheorem{lemma}[theorem]{Lemma}
\newtheorem{corollary}[theorem]{Corollary}
\newtheorem{definition}[theorem]{Definition}
\theoremstyle{plain}{\theorembodyfont{\rmfamily}
\newtheorem{assumption}[theorem]{Assumption}}
\theoremstyle{plain}{\theorembodyfont{\rmfamily}
}
\theoremstyle{plain}{\theorembodyfont{\rmfamily}
\newtheorem{algorithm}[theorem]{Algorithm}}
\theoremstyle{plain}{\theorembodyfont{\rmfamily}
}
\theoremstyle{plain}{\theorembodyfont{\rmfamily}
\newtheorem{problem}[theorem]{Problem}}
\theoremstyle{plain}{\theorembodyfont{\rmfamily}
\newtheorem{remark}[theorem]{Remark}}
\theoremstyle{plain}{\theorembodyfont{\rmfamily}
}
\definecolor{labelkey}{rgb}{0,0.08,0.45}
\definecolor{refkey}{rgb}{0,0.6,0.0}
\definecolor{Brown}{rgb}{0.45,0.0,0.05}
\definecolor{dgreen}{rgb}{0.00,0.49,0.00}
\definecolor{dblue}{rgb}{0,0.08,0.75}
\numberwithin{equation}{section}
\title{A Stochastic Variance Reduction Algorithm with Bregman Distances for Structured Composite Problems }
\author{ Nguyen Van Dung$^1$ and 
B$\grave{\text{\u{a}}}$ng C\^ong V\~u$^2$\\[5mm]
 \\
$^1$ Department of Mathematics, University of Transport and Communications,\\ 3 Cau Giay Street, Hanoi, Vietnam\\
 dungnv@utc.edu.vn;  bangcvvn@gmail.com
 } 
\begin{document}
\maketitle

\begin{abstract} We develop a novel stochastic primal dual splitting method with Bregman distances for solving a structured composite problems involving infimal convolutions in non-Euclidean spaces. The sublinear convergence in expectation of the primal-dual gap is proved under mild conditions on stepsize for the general case. The linear convergence rate is obtained under additional condition like the strong convexity relative to Bregman functions. 

\end{abstract}

\noindent {\bf Keywords:} 
stochastic optimization, variance reduction,
Bregman distance,
splitting,
reflected method, 
duality,
primal-dual algorithm, ergodic convergence, linear convergence.

\noindent {\bf Mathematics Subject Classifications (2010)}: 49M29, 65K10, 65Y20, 90C25.

\section{Introduction}

The stochastic gradient method is one of the most popular algorithm in machine learning. This method  was introduced over 60 years ago.  The main advantage of this type of method is  the low computational cost per iteration. However, stochastic gradient algorithm  converges slowly and achieves only the sublinear convergence rates even when the underlying problem is strongly convex.  To accelerate the convergence of the stochastic gradient algorithms, one of the popular approach is to use the variance reduction techniques. As a result,
various variance-reduced stochastic gradient algorithms have been proposed in the literature; see \cite{bach2,Xiao14,Nitanda,Hazan} and the references therein. Recently, variance reduced methods are also investigated extensively for solving saddle point problems \cite{Bach1,Du19,Devraj19,Arizona,Shi17}. In this paper, we focus on the following stochastic optimization problem in non-Euclidean spaces.

\begin{problem}\label{prob:1} Let $d$ and $p$ be strictly positive integer.
Let $\XX$ be  $(\RR^d, \|\cdot\|_{\XX})$, let $\YY$ be $(\RR^p, \|\cdot\|_{\YY})$. 
Let $f\in\Gamma_0(\XX)$, $g\in\Gamma_0(\YY)$ and
$h\colon\XX\to\RR$; $\ell\colon \YY^* \to \RR$ be convex differentiable functions. Let $K\colon\XX\to\YY$ be a bounded linear operator. 
The primal problem is to 
\begin{align}
\underset{  x\in \XX }{\text{minimize}} \; h(x)+(\ell^*\vuo g)(Kx)+f(x), \label{e:primalap}
\end{align}
and the dual problem is to 
\begin{align}
\underset{  v\in \YY^* }{\text{minimize}} \; (h+f)^*(-K^*v)+g^*(v) +\ell(v), \label{e:dualap}
\end{align}
where $h$ and $\ell$ are given by finite sums. Here the symbol
$\vuo$ denotes the infimal convolution:
\begin{equation}
    \ell^*\vuo g\colon x\mapsto \inf_{y\in\YY} (\ell^*(y) +g(x-y)).
\end{equation}
 \end{problem}
 This framework covers various problem formulations in image processing, machine learning, statistical learning, game theory, reinforcement learning, portfiolo optimization, \cite{bach2,Xiao14,Nitanda,Hazan,Bach1,Du19,Devraj19,Arizona,Shi17,plc6}.
 
 When $\XX$ and $\YY$ are Euclidean spaces,
 this typical primal-dual framework was first investigated in \cite{plc6} and then in \cite{Vu2013,Buicomb,Bot2013,Bot2014,Vu19} for instances. In these works, various deterministic primal-dual splitting methods were proposed where the proximity operators of $f$, $g$ are often used in the backward steps, the gradient of $h$, $\ell^*$ and $K, K^*$ are often used in the forward steps. In other words, they are full-splitting. The stochastic counterparts of some  primal-dual splitting methods were also investigated  in the literature; see \cite{bang1,Dung20,Com02,bang4} for instances. 
 
 When $\XX$ and $\YY$ are non-Euclidean spaces, finding the closed-form expression of proximity operators is quite a challenge. Alternatively, the notion of the Bregman proximity operator appeared naturally and based on Bregman distance \cite{Bregman67}. This concept plays an important role in various field of applied mathematics and information theory. Numerical optimization methods with Bregman proximity operator are investigated widely in the literature; see \cite{Chen93,Sico03,Buicomb1,Dang,Duchi10,Hien16,Lei17,Nem09,Tseng08,Lu18,Quang} for instances. In particular, the stochastic numerical methods with Bregman  distance for saddle point problems are of great interest; see  \cite{Bach1,Nem09,Shi17,Chen14} and the references therein.

 The objective of this paper is to develop a new stochastic  primal-dual splitting method,
 for solving Problem \ref{prob:1}, which incorporates the following features:
 (i) Using the the Bregman proximity operators which allows us the implement the proposed algorithm easily in non-Euclidan spaces; (ii) Using the acceleration technique in term of variance reduced to obtain a faster convergence rate; (iii) The proposed algorithm is full splitting.  To the best of our knowledge, there exists no method in the literature for solving Problem \ref{prob:1} with non-trivial $\ell$, which has simultaneously these features.
 
 The paper is organized as follows. In Section 2, we recall the basic notions in convex analysis and we precisely state our assumptions which will be used in the proof of the convergence of the proposed method. Subsequently, in Section 3, we present the proposed methods. Then, we derive the linear convergence in expectation of the primal-dual gap for the strong convex case, 
 and the sublinear convergence for the general case.

\noindent{\bf Notations.} Throughout this paper, we use the notation $\|\cdot\|$ for any norm in the spaces $\XX,\YY$ as well as their dual spaces $\XX^*,\YY^*$. The conjugate of the operator $K$ 
is denoted by $K^*$.
The interior and closure  of a subset $X$ is denoted by $\inte(X)$ and $\overline{X}$, respectively.
The domain of a function 
$f\colon \XX\to \left]0,+\infty\right]$ is 
$\dom(f)=\menge{x\in\XX}{f(x)< +\infty}$. This function is proper if $\dom(f)\not=\emp$.
We denote
$\Gamma_0(\XX)$
the class of all proper lower semicontinous functions $f$ from $\XX$ to $\left]0,+\infty\right]$. Let $A$ be a set-valued operator on $\XX$, the inverse of $A$ is 
$A^{-1}\colon x\mapsto \menge{u\in \XX}{x\in Au}$.
The expectation of a random variable $x$ is denoted by $\E\left[x\right]$.   
 
\section{Preliminaries}
\subsection{Bregman distance}
We first recall the following definitions.

\begin{definition}{\rm\cite{Ccm01,Sico03}}
	\label{def:1}
Let $\phi \in\Gamma_0(\XX)$. Its conjutate is 
 $\phi^*\colon\XX^*\to\RX\colon
x^*\mapsto\sup_{x\in\XX}(\pair{x}{x^*}-\phi(x))$ and its
Moreau subdifferential \cite{Mor63c} is
\begin{equation}
\label{e:subdiff}
\partial\phi\colon\XX\to 2^{\XX^*}\colon x\mapsto
\menge{x^*\in\XX^*}{(\forall y\in\XX)\,
\Pair{y-x}{x^*}+\phi(x)\leq\phi(y)}.  
\end{equation}
Recall that $\phi$ is a Legendre function if it is 
essentially smooth in the sense that $\partial\phi$ is 
both locally bounded and single-valued on its
domain, and essentially strictly convex in the sense
that $\partial\phi^*$ is locally bounded on its domain and 
$\phi$ is strictly convex on every convex subset of  $\dom\partial\phi$.
Moreover, $\phi$ is G\^ateaux differentiable on $\inte\dom\phi\neq\emp$
and the associated \emph{Bregman distance} is defined by
\begin{equation}
\label{e:Bdist}
\begin{aligned}
D_{\phi}\colon\XX\times\XX&\to\,[0,+\infty]\\
(x,y)&\mapsto 
			\begin{cases}
				\phi(x)-\phi(y)-\Pair{x-y}{\nabla\phi(y)},&\text{if}\;\;y\in\inte\dom\phi;\\
				\pinf,&\text{otherwise},
			\end{cases}
\end{aligned}
\end{equation}
and the Bregman proximity operator of $g\in\Gamma_0(\XX)$ with respect to $\phi$ is
\begin{equation}
\label{eq121d}
\prox_{g}^{\phi}\colon\XX\to 2^{\XX}\colon x\mapsto \arg\min_{y\in\XX} g(y) +D_{\phi}(y,x).
\end{equation}
\end{definition}
Note that $\prox_{g}^{\phi}$ is well-defined and single valued,		$$\prox_{g}^{\phi}=(\nabla\phi+\partial g)^{-1}\circ\nabla\phi\colon\inte\dom\phi\to\inte\dom\phi.
$$
Let us recall 
the following lemma play a key role in the analysis of Bregman-based methods.

		

\begin{lemma}
	\label{lem123:1}
	Let $x\in\XX$ and let $(z,p)\in(\inte\dom\phi)^2$. The following hold.
	\begin{enumerate}
		\item
		\label{lem123:1i}
		$\Pair{x-p}{\nabla\phi(z)-\nabla\phi(p)}=D(x,p)+D(p,z)-D(x,z)$.
		\item
		\label{lem123:1ib}
		$\Pair{z-p}{\nabla\phi(z)-\nabla\phi(p)}=D(z,p)+D(p,z)$.
		\item
		\label{lem123:1ii}
		Suppose that $p = \prox_{g}^{\phi}z$ and $g$ is $\alpha$-strongly convex relative to $\phi$, then
		\begin{equation}
			g(p) + D(p,z) \leq g(x) + D(x,z) - (1+\alpha)D(x,p).
		\end{equation}
	\end{enumerate}
\end{lemma}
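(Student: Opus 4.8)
The plan is to handle the three statements in increasing order of difficulty, obtaining \ref{lem123:1i} and \ref{lem123:1ib} by a purely algebraic expansion of the definition \eqref{e:Bdist}, and then deriving \ref{lem123:1ii} by combining \ref{lem123:1i} with the subdifferential characterization of the Bregman proximity operator.

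For \ref{lem123:1i} I would simply expand $D(x,p)$, $D(p,z)$, and $D(x,z)$ according to \eqref{e:Bdist} and form $D(x,p)+D(p,z)-D(x,z)$. The three values of $\phi$ at $x$, $p$, $z$ each occur twice with opposite signs and cancel, leaving only the linear terms. Collecting the inner products that carry $\nabla\phi(z)$ gives $\Pair{x-p}{\nabla\phi(z)}$, while the single remaining term is $-\Pair{x-p}{\nabla\phi(p)}$; together these are exactly $\Pair{x-p}{\nabla\phi(z)-\nabla\phi(p)}$. This needs no hypothesis beyond $z,p\in\inte\dom\phi$, so that $\nabla\phi(z)$ and $\nabla\phi(p)$ are defined. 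Statement \ref{lem123:1ib} then follows immediately by setting $x=z$ in \ref{lem123:1i} and observing from \eqref{e:Bdist} that $D(z,z)=0$.

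The substantive step is \ref{lem123:1ii}. First I would use the identity $\prox_{g}^{\phi}=(\nabla\phi+\partial g)^{-1}\circ\nabla\phi$ recorded after Definition \ref{def:1}: from $p=\prox_{g}^{\phi}z$ one reads off $\nabla\phi(z)\in\nabla\phi(p)+\partial g(p)$, that is, $\nabla\phi(z)-\nabla\phi(p)\in\partial g(p)$. Next I would invoke $\alpha$-strong convexity of $g$ relative to $\phi$, which for this particular subgradient yields the strengthened subgradient inequality
\[
g(x)\geq g(p)+\Pair{x-p}{\nabla\phi(z)-\nabla\phi(p)}+\alpha D(x,p).
\]
Substituting the value of $\Pair{x-p}{\nabla\phi(z)-\nabla\phi(p)}$ furnished by \ref{lem123:1i} and rearranging converts the Bregman terms into $D(x,z)-D(p,z)-(1+\alpha)D(x,p)$; moving $D(p,z)$ to the left-hand side gives the asserted inequality.

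The only genuine obstacle is matching the definition of relative strong convexity to the Bregman-remainder form of the subgradient inequality displayed above; once that inequality is in hand the argument is a one-line substitution of \ref{lem123:1i}. I would therefore state at the outset that $g$ being $\alpha$-strongly convex relative to $\phi$ means $g-\alpha\phi$ is convex, and derive the displayed inequality by applying the ordinary subgradient inequality to $g-\alpha\phi$ at $p$ and re-expressing the correction term through $D(x,p)$ via \eqref{e:Bdist}.
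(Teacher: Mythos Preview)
Your proposal is correct and follows essentially the same route as the paper: for \ref{lem123:1i}--\ref{lem123:1ib} the paper simply cites \cite[Proposition~2.3]{Sico03} (your algebraic expansion is exactly what underlies that result), and for \ref{lem123:1ii} the paper does precisely what you describe---read off $\nabla\phi(z)-\nabla\phi(p)\in\partial g(p)$ from the definition of $\prox_g^\phi$, apply the relative strong convexity inequality, and substitute \ref{lem123:1i}. One small remark: your final paragraph proposes to derive the strengthened subgradient inequality from the characterization ``$g-\alpha\phi$ is convex,'' but in this paper relative strong convexity is \emph{defined} directly by that subgradient inequality (see Assumption~\ref{giathiet3}), so the detour is unnecessary.
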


\begin{proof}
	\ref{lem123:1i}\&\ref{lem123:1ib}: \cite[Proposition~2.3]{Sico03}.
	
	\ref{lem123:1ii}: By definition, 
	\begin{equation}
		\nabla\phi(z)-\nabla\phi(p)\in\partial g(p),
	\end{equation}
	and hence \eqref{e:subdiff} yields
	\begin{equation}
	\alpha D(x,p)+	g(p)+\Pair{x-p}{\nabla\phi(z)-\nabla\phi(p)}\leq g(x).
	\end{equation}
	Consequently, the assertion follows from \ref{lem123:1i}.
\end{proof}


%
\subsection{Assumptions}
We state several additional assumptions to Problem \ref{prob:1} which will be used in the next section. 
\begin{assumption}\label{as:1}
 There exists a point $(x^\star,v^\star)\in\XX\times\YY^*$ such that the primal-dual gap function defined by
	\begin{align}
	G:&\mathcal{X}\times\mathcal{Y}^* \to \mathbb R \cup \{-\infty,+\infty\} \notag\\
	&(x,v) \mapsto h(x)+f(x)+\langle Kx,v\rangle -g^*(v) -\ell(v) \label{s}
	\end{align}
	verifies the following condition:
	\begin{align}\label{e:saddle}
	\big(\forall x\in\XX\big)\big(\forall v\in\YY^*\big)\;
	G(x^\star,v) \leq G(x^\star,v^\star) \leq G(x,v^{\star}),
	\end{align}
	We denote $\mathcal{S}$ the set of all point $(x^*,v^*)$ such that \eqref{e:saddle} is satisfied.
\end{assumption}
\begin{assumption}\label{giathiet}\upshape  The following conditions will be used.
	\begin{enumerate}
		\item [\textup{(B0)}] $\varphi$ and $\psi$ are $1$-strong convex,  G\^ateaux differentiable on $\inte\dom\varphi\neq\emp$ and  $\inte\dom\psi\neq\emp$, ,
		\emph{Legendre functions} on $\XX$ and 
		$\YY^*$, respectively. Moreover, $\dom(f) \subset \inte \dom(\varphi)$ and 
		$\dom(g^*)\subset \inte \dom(\psi)$. 
		\item[\textup{(B1)}] $h=\dfrac{1}{n} \sum \limits_{i=1}^n h_i$ 
		and $\ell =\dfrac{1}{n'} \sum \limits_{i=1}^{n'} \ell_i$,
		where $h_i$ and $\ell_i$ are differentiable with  $\mu_i$ and $\nu_i$-Lipschitz continuous, respectively: $$\|\nabla h_i(x)-\nabla h_i(y)\| \le \mu_i \|x-y\|\; \text{and}\; \|\nabla \ell_i(u)-\nabla \ell_i(v)\| \le \nu_i \|u-v\|.$$
		Hence,
		$$\| \nabla h(x)-\nabla h(y) \| \le \mu \|x-y\|, \text{	where $\mu =(1/n) \sum \limits_{i=1}^n \mu_i,$}$$
	and 
		$$\| \nabla \ell(u)-\nabla \ell(v) \| \le \nu \|u-v\|,\;
		\text{where $\nu =(1/n') \sum \limits_{j=1}^{n'} \nu_j$.}
		$$
		
		\end{enumerate}
\end{assumption}
\begin{assumption}\label{giathiet3}
The functions $f$ and $g^*$ are 
			$\alpha$-strongly convex relative to $\varphi$ and $\psi$, respectively, i.e.,
			\begin{equation}
              \begin{cases}
       (\forall x,\ y \in \inte\dom(\varphi))\ (\forall p\in\partial f(y))\;f(x) \geq f(y) + \pair{p}{x-y} + \alpha D_{\varphi}(x,y),\\
       (\forall u,\ v\in\inte\dom(\psi))\ (\forall q\in\partial g^*(v))\    g^*(u) \geq g^*(v) + \pair{q}{u-v} + \alpha D_{\psi}(u,v).
              \end{cases}
			\end{equation}
\end{assumption}
\begin{remark} Assumption \ref{as:1} and Assumption \ref{giathiet} are very popular in the literature. While, many  of functions, for which  Assumption \ref{giathiet3} can be verified, can be found  in \cite{Duchi10,Shi17,Lu18}. 
\end{remark}
\section{Main results}\label{sec:Alg}
We propose the following primal dual splitting method 
for solving Problem \ref{prob:1} which incorporates the variance reduction technique as the Bregman proximity operators at each iteration. 

\begin{algorithm}
	\label{alg3}
	\vspace*{0.3em}
	Let $(\bar x_0,\bar v_0)\in\inte(\dom(\varphi))\times\inte(\dom(\psi))$, let $\gamma>0,\ \theta \in \{0;1\}$ and $m$ is a strictly positive integer. Let $(\omega_k)_{k\in\NN}$ be a strictly positive sequence in $\RR$.	Let  $Q=\{q_1, \ldots,q_n\}$ and $Q'=\{q'_1, \ldots, q'_{n'} \} $ be the probabilites on  $\{1,\ldots,n\}$  and  $ \{1,\ldots,n' \}$, respectively.
	\\
	Set $x^0_0=x^0_{-1}=\bar x_0,\ v^0_0=v^0_{-1}= \bar v_0.	$\\
	{\bf Iterate:} for $s=0,1,2,\ldots$
	\begin{align*}
	&\bar x=\bar x_s, \bar v=\bar v_s\\
	&x_0 = x_{0}^s,\ x_{-1}= x_{-1}^s\\
	&v_0 = v_{0}^s,\ v_{-1}= v_{-1}^s\\
	&{\bf iterate:}\ \text{for}\ k=0,1,\ldots,m-1\\
	&\hspace{1cm} \text{pick} \ i_k \in \{1,\ldots,n\} \ \text{and} \  \ j_{k} \in \{1,\ldots,n' \} \ \text{randomly according to $Q$ and $Q'$, respectively}\\
	&\hspace{1cm}\begin{cases}
	y_k&= x_k+\theta(x_k -x_{k-1})\\
	z_k&= \dfrac{ \nabla h_{i_k}(y_k)- \nabla h_{i_k}(\bar x)}{q_{i_k}n}+\nabla h(\bar x)\\
	u_k&=v_k+\theta(v_k-v_{k-1})\\
	t_k&= \dfrac{\nabla \ell_{j_k}(u_k)-\nabla \ell_{j_k}(\bar v)}{q'_{j_k}n'}+\nabla \ell(\bar v)\\
	x_{k+1}&=(\nabla \varphi + \gamma \partial f)^{-1}(\nabla \varphi(x_k)-\gamma z_k-\gamma K^* u_k)\\
	v_{k+1}&=(\nabla \psi + \gamma \partial g^*)^{-1}(\nabla \psi(v_k)-\gamma t_k+\gamma K y_k),
	\end{cases}\\
	&\text{\bf end}\\
		& \text{set}\ \bar x_{s+1}= \sum_{k=1}^m \omega_k x_k\; \text{and}\; \bar v_{s+1}=\sum_{k=1}^m \omega_k v_k.\\
		& x^{s+1}_0=x_m,\ x^{s+1}_{-1}=x_{m-1},\ v^{s+1}_0=v_m,\ v^{s+1}_{-1}=v_{m-1}.
		\end{align*}
		{\bf end}
\end{algorithm}
\begin{remark} Here are some remarks.
\begin{enumerate}
    \item Algorithm \ref{alg3} is an extension of the one in \cite{Dung20} which is restricted to Hilbert spaces setting with the Euclidean norm
and $\theta= 1$ and without variance reduction. Further connections to existing work \cite{sva2} can be found in \cite{Dung20}. In particular, we use the variance reduction technique as in \cite{Xiao14,Shi17}.
 When $\theta\not=0$,
the proposed algorithm is different from the inertial stochastic primal-dual splitting method in \cite{bang1}.
\item Recently, there appeared several stochastic variance reduction algorithms in   \cite{Bach1,Nem09,Shi17,Chen14} which can be used to solve Problem \ref{prob:1}. However, the resulting algorithms are not full-splitting and they are different from our proposed algorithm here.
\end{enumerate}
\end{remark}
Before stating our main convergence results of Algorithm \ref{alg3}, we first prove several auxiliary results which are natural extension of 
\cite[Lemma 1, Corollary 3]{Xiao14} from primal framework to our primal-dual framework.

\begin{lemma}
\label{lm1dad}
Suppose the Assumption \ref{giathiet} $(B1)$ and Assumption \ref{as:1} are satisfied. Then, for all $(x,v)\in \dom(f)\times\dom(g^*)$ and for all $(x^*,v^*)\in\mathcal{S}$, we have
\begin{equation} \label{e:sd}
\dfrac{1}{n} \sum \limits_{i=1}^n \dfrac{1}{nq_i} \|\nabla h_i(x)-\nabla h_i(x^\star)\|^2 \le 2L_Q [G(x,v^\star)-G(x^\star,v^\star)],
\end{equation}
and 
\begin{equation}\label{e:sc}
\dfrac{1}{n'} \sum \limits_{j=1}^{n'} \dfrac{1}{n'q'_j} \|\nabla \ell_j(v)-\nabla \ell_j(v^\star)\|^2 \le 2L_{Q'} [G(x^\star,v^\star)-G(x^\star,v)],
\end{equation}
where $L_Q=\max_i \mu_i/(q_i n),\ L_{Q'}=\max_j \nu_j/(q_j' n').$
\end{lemma}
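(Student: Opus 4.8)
The plan is to bound each left-hand side by a Bregman-type divergence of $h$ (resp. $\ell$) evaluated at the saddle point, and then to dominate that divergence by the primal-dual gap using the first-order optimality conditions encoded in Assumption \ref{as:1}. First I would invoke the standard consequence of convexity and $\mu_i$-Lipschitz continuity of $\nabla h_i$: for any convex $\phi$ with $L$-Lipschitz gradient one has $\tfrac{1}{2L}\|\nabla\phi(x)-\nabla\phi(y)\|^2\le \phi(x)-\phi(y)-\pair{\nabla\phi(y)}{x-y}$, which holds in a general normed space (the gradients measured in the dual norm) by applying the descent lemma to $z\mapsto\phi(z)-\pair{\nabla\phi(y)}{z}$ and minimizing its quadratic upper model, whose infimum is $-\tfrac{1}{2L}\|\nabla\phi(x)\|^2$ in the dual norm. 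Taking $\phi=h_i$, $L=\mu_i$, $y=x^\star$ gives $\|\nabla h_i(x)-\nabla h_i(x^\star)\|^2\le 2\mu_i\big(h_i(x)-h_i(x^\star)-\pair{\nabla h_i(x^\star)}{x-x^\star}\big)$, where the parenthesis is nonnegative by convexity. Since $\mu_i/(nq_i)\le L_Q$ for every $i$, I can factor $2L_Q$ out of the weighted average and use $h=\tfrac1n\sum_i h_i$ to obtain
\[
\frac{1}{n}\sum_{i=1}^n\frac{1}{nq_i}\|\nabla h_i(x)-\nabla h_i(x^\star)\|^2\le 2L_Q\big(h(x)-h(x^\star)-\pair{\nabla h(x^\star)}{x-x^\star}\big).
\]

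It then remains to show that this divergence is dominated by $G(x,v^\star)-G(x^\star,v^\star)$. Expanding the gap from \eqref{s} gives $G(x,v^\star)-G(x^\star,v^\star)=\big(h(x)-h(x^\star)\big)+\big(f(x)-f(x^\star)\big)+\pair{K(x-x^\star)}{v^\star}$, so the claim reduces to the inequality $f(x)-f(x^\star)+\pair{\nabla h(x^\star)}{x-x^\star}+\pair{K(x-x^\star)}{v^\star}\ge 0$. This is precisely the subgradient inequality for $f$ at $x^\star$: the right-hand inequality of \eqref{e:saddle} says $x^\star$ minimizes $x\mapsto G(x,v^\star)$, whose first-order condition is $-\nabla h(x^\star)-K^*v^\star\in\partial f(x^\star)$, and substituting this into the definition \eqref{e:subdiff} yields exactly the desired bound. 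Combining with the display above proves \eqref{e:sd}.

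The dual estimate \eqref{e:sc} is entirely symmetric: the left-hand inequality of \eqref{e:saddle} states that $v^\star$ maximizes $v\mapsto G(x^\star,v)$, equivalently $v^\star$ minimizes $v\mapsto g^*(v)+\ell(v)-\pair{Kx^\star}{v}$, giving $Kx^\star-\nabla\ell(v^\star)\in\partial g^*(v^\star)$; pairing this with the smoothness bound for each $\ell_j$ and the estimate $\nu_j/(n'q'_j)\le L_{Q'}$ reproduces the same argument and yields \eqref{e:sc}. I expect no essential difficulty in this proof; the only points requiring care are that the smoothness inequality must be stated in the non-Hilbertian setting with differences of gradients measured in the dual norm, and that each per-index divergence term must be kept nonnegative so that replacing $\mu_i/(nq_i)$ (resp. $\nu_j/(n'q'_j)$) by its maximum $L_Q$ (resp. $L_{Q'}$) is legitimate.
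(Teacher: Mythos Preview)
Your proposal is correct and follows essentially the same route as the paper: bound the weighted gradient-difference sum by the Bregman residual $h(x)-h(x^\star)-\pair{\nabla h(x^\star)}{x-x^\star}$ via per-component smoothness, then dominate that residual by $G(x,v^\star)-G(x^\star,v^\star)$ using the first-order optimality condition at the saddle point. The paper outsources the first step to \cite[Lemma~2]{Shi17} whereas you spell out the descent-lemma argument (including the non-Euclidean version with dual norms), but the structure and the key optimality step are identical.
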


\begin{proof}
    It follows from \cite[Lemma 2]{Shi17} that
    $$ \dfrac{1}{n} \sum \limits_{i=1}^n \dfrac{1}{nq_i} \|\nabla h_i(x)-\nabla h_i(x^\star)\|^2 \le 2L_Q[h(x)-h(x^\star)-\pair{x-x^\star}{\nabla h(x^\star)}.$$
    Since $(x^*,v^*)\in \mathcal{S}$, we have 
    $$x^\star=\underset{x}{\argmin}\ G(x,v^\star)=\underset{x}{\argmin}\  \{h(x)+R(x)\},$$
    where $R(x)=f(x)+\pair{Kx}{v^\star}-g^*(v^\star)-\ell(v^\star).$ By the optimality of $x^\star$,
    there exists $\xi^\star \in \partial R(x^\star)$ such that $\nabla h(x^\star)+\xi^\star=0.$ Therefore
    \begin{align*}
        h(x)-h(x^\star)-\pair{x-x^\star}{\nabla h(x^\star)}&=h(x)-h(x^\star)+\pair{x-x^\star}{\xi^\star}\\
        &\le h(x)-h(x^\star)+R(x)-R(x^\star)\\
        &=G(x,v^\star)-G(x^\star,v^\star),
    \end{align*}
    where the second inequality follows from the convexity of $R$. Hence, \eqref{e:sd} is proved.
    The estimation \eqref{e:sc}
    is proved by the same fashion.
\end{proof}
\begin{corollary}\label{lm1}
Under the same assumptions as in Lemma \ref{lm1dad}. Let
 $(x_k)_{k\in\NN}, (y_k)_{k\in\NN},$\\
 $(u_k)_{k\in\NN}, (v_k)_{k\in\NN}$, $ (z_k)_{k\in\NN}$ $(t_k)_{k\in\NN}$ be sequences generated by Algorithm \ref{alg3}. Let
 $\E_{i_k}$  and  $\E_{j_k}$  be the conditional expectation with respect to the history $\{(i_0,j_0),\ldots, (i_{k-1},j_{k-1})\}$. Then,
 we have
 \begin{equation}\label{e:unbiased1}
 (\forall k\in\NN)\;
 \E_{i_k}\left[z_k\right]=\nabla h(y_k)\quad \text{and}\quad \E_{j_k}\left[t_k\right]=\nabla \ell (u_k) .
 \end{equation}
 Moreover, set $L_1=\max \{L_Q,\ L_{Q'}\},\ L_2=\max \{ \mu_i^2/(q_i n),\  \nu_j^2/(q_j'n')\} $.
 If $\theta=0$ then 
 \begin{equation}\label{e:var1}
 \begin{cases}
  \E_{i_k} \|z_k- \nabla h(y_k)\|^2 &\le 4L_1\big( [G(x_k,v^\star)-G(x^\star,v^\star)]+G(\bar x,v^\star)-G(x^\star,v^\star) \big)\\
  \E_{j_k} \|t_k-\nabla \ell (u_k)\|^2 &\le 4L_1 \big( [G(x^\star,v^\star)-G(x^\star,v_k)]+G( x^\star,v^\star)-G(x^\star,\bar v)\big).
 \end{cases}
 \end{equation}
If $\theta=1$ then 
 \begin{equation}\label{e:var2}
 \begin{cases}
 \E_{i_k} \|z_k-\nabla h (y_k)\|^2 &\le 4 L_2  \|x_k-x_{k-1}\|^2+8L_1  [G(x_k,v^\star)-G(x^\star,v^\star)] \\
 &\ \ +4L_1 [G( \bar x,v^\star)-G(x^\star, v^\star))\\
 \E_{j_k} \|t_k-\nabla \ell (u_k)\|^2 &\le 4 L_2 \|v_k-v_{k-1}\|^2+8L_1  [G(x^\star,v^\star)-G(x^\star,v_k)] \\
 &\ \ +4L_1 [G( x^\star,v^\star)-G(x^\star,\bar v) ).
  \end{cases}
 \end{equation}
\end{corollary}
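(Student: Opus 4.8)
The plan is to prove the three displays in order, carrying out the argument in full for the primal estimator $z_k$ and obtaining the dual estimates for $t_k$ by the verbatim symmetric argument. For the unbiasedness \eqref{e:unbiased1} I would simply average the defining formula for $z_k$ over $i_k\sim Q$:
\begin{equation*}
\E_{i_k}[z_k]=\sum_{i=1}^n q_i\left(\frac{\nabla h_i(y_k)-\nabla h_i(\bar x)}{q_i n}+\nabla h(\bar x)\right)=\frac1n\sum_{i=1}^n\big(\nabla h_i(y_k)-\nabla h_i(\bar x)\big)+\nabla h(\bar x),
\end{equation*}
and then collapse this using the identity $\nabla h=\tfrac1n\sum_i\nabla h_i$ supplied by (B1), which leaves exactly $\nabla h(y_k)$. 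The same computation with $Q'$ and $\ell=\tfrac1{n'}\sum_j\ell_j$ gives $\E_{j_k}[t_k]=\nabla\ell(u_k)$.

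For the variance bounds the common first step is to observe that $z_k-\nabla h(y_k)=z_k-\E_{i_k}[z_k]$ is the centered version of $w_{i_k}:=(\nabla h_{i_k}(y_k)-\nabla h_{i_k}(\bar x))/(q_{i_k}n)$, since the deterministic shift $\nabla h(\bar x)$ does not affect the variance. Using $\E\|w-\E w\|^2\le\E\|w\|^2$ and evaluating the second moment yields the master bound
\begin{equation*}
\E_{i_k}\|z_k-\nabla h(y_k)\|^2\le\frac1{n^2}\sum_{i=1}^n\frac1{q_i}\|\nabla h_i(y_k)-\nabla h_i(\bar x)\|^2,
\end{equation*}
so everything reduces to estimating this sum, which depends on $\theta$ only through $y_k$.

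In the case $\theta=0$ one has $y_k=x_k$, so I would insert $\nabla h_i(x^\star)$ and split $\nabla h_i(x_k)-\nabla h_i(\bar x)=[\nabla h_i(x_k)-\nabla h_i(x^\star)]-[\nabla h_i(\bar x)-\nabla h_i(x^\star)]$, apply $\|a-b\|^2\le2\|a\|^2+2\|b\|^2$, and invoke \eqref{e:sd} on each of the two resulting sums (noting $\tfrac1{n^2}\sum_i\tfrac1{q_i}\|\cdot\|^2=\tfrac1n\sum_i\tfrac1{nq_i}\|\cdot\|^2$); with $L_Q\le L_1$ this gives \eqref{e:var1}. In the case $\theta=1$ one has $y_k-x_k=x_k-x_{k-1}$, and I would use the three-term decomposition $\nabla h_i(y_k)-\nabla h_i(\bar x)=a_i+b_i-c_i$ with $a_i=\nabla h_i(y_k)-\nabla h_i(x_k)$, $b_i=\nabla h_i(x_k)-\nabla h_i(x^\star)$, $c_i=\nabla h_i(\bar x)-\nabla h_i(x^\star)$. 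The grouping $\|a_i+b_i-c_i\|^2\le2\|a_i+b_i\|^2+2\|c_i\|^2\le4\|a_i\|^2+4\|b_i\|^2+2\|c_i\|^2$ is what produces the asymmetric constants: the $\mu_i$-Lipschitz estimate $\|a_i\|\le\mu_i\|x_k-x_{k-1}\|$ combined with $\tfrac1{n^2}\sum_i\mu_i^2/q_i\le L_2$ handles the $a_i$-term as $4L_2\|x_k-x_{k-1}\|^2$, while \eqref{e:sd} applied to the $b_i$- and $c_i$-sums produces $8L_1[G(x_k,v^\star)-G(x^\star,v^\star)]$ and $4L_1[G(\bar x,v^\star)-G(x^\star,v^\star)]$, which is exactly \eqref{e:var2}. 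The dual bounds follow by replacing $(h_i,x_k,\bar x,x^\star,Q)$ with $(\ell_j,v_k,\bar v,v^\star,Q')$ and \eqref{e:sd} with \eqref{e:sc}; the gap changes sign to the nonnegative $G(x^\star,v^\star)-G(x^\star,v_k)$ because the saddle inequality bounds the dual variable from above.

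The delicate point is bookkeeping of constants rather than any conceptual difficulty. For $\theta=1$ the triangle inequality must be grouped precisely so that the $x_k$-gap carries weight $8$ and the $\bar x$-gap weight $4$ (a symmetric split would spoil this), and one must check $\tfrac1{n^2}\sum_i\mu_i^2/q_i\le L_2$ directly from $L_2=\max_i\{\mu_i^2/(q_i n)\}$ so that the Lipschitz contribution collapses to a clean multiple of $\|x_k-x_{k-1}\|^2$; the replacement of $L_Q,L_{Q'}$ by their maximum $L_1$ is then only an upper bound used to state the two cases uniformly.
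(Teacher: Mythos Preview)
Your proposal is correct and follows essentially the same route as the paper's proof: the unbiasedness is computed by direct averaging, the variance is bounded by dropping the centering term, and the resulting sum $\tfrac1n\sum_i\tfrac1{nq_i}\|\nabla h_i(y_k)-\nabla h_i(\bar x)\|^2$ is handled via the same two-stage triangle-inequality splits (yielding coefficients $2,2$ for $\theta=0$ and $4,4,2$ for $\theta=1$) combined with Lemma~\ref{lm1dad} and the Lipschitz bound. The paper organizes the $\theta=1$ split as two successive two-term splits rather than your explicit three-term grouping $\|a_i+b_i-c_i\|^2\le 4\|a_i\|^2+4\|b_i\|^2+2\|c_i\|^2$, but the arithmetic is identical.
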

\begin{proof}
We take expectation with respect to $i_k$ to obtain
	$$\E_{i_k} \big[\dfrac{1}{nq_{i_k}}\nabla h_{i_k}(y_k)\big]=\sum_{i=1}^n \dfrac{q_i}{nq_i} \nabla h_i(y_k)=\sum_{i=1}^n \dfrac{1}{n}\nabla h_i(y_k)=\nabla h(y_k).$$
	Similarly, we have $\E_{i_k}[(1/(nq_{i_k}))\nabla h_{i_k}(\bar x)]=\nabla h(\bar x)$. Therefore
	$$\E_{i_k}\left[ z_k\right]=\E_{i_k} \big[ \dfrac{ \nabla h_{i_k}(y_k)- \nabla h_{i_k}(\bar x)}{q_{i_k}n}+\nabla h(\bar x)\big]=\nabla h(y_k).$$
	Using the same argument, we also obtain $\E_{j_k} \left[t_k\right]=\nabla \ell (u_k).$ Hence, \eqref{e:unbiased1} is proved. We next
	bound the variance, we have
	\begin{align*}
	\E_{i_k}\left[\|z_k-\nabla h(y_k)\|^2\right]&=\E_{i_k} \left[\big\|\dfrac{1}{nq_{i_k}}\big( \nabla h_{i_k}(y_k)-\nabla h_{i_k}(\bar x)\big)+\nabla h(\bar x)-\nabla h(y_k) \big\|^2\right]\\	
	&=\E_{i_k} \left[\dfrac{1}{(nq_{i_k})^2}\|\nabla h_{i_k}(y_k)-\nabla h_{i_k}(\bar x)\|^2-\|\nabla h(y_k)-\nabla h(\bar x)\|^2\right]\\
	&\le \E_{i_k}\left[ \dfrac{1}{(nq_{i_k})^2}\|\nabla h_{i_k}(y_k)-\nabla h_{i_k}(\bar x)\|^2\right]\\
	&=\dfrac{1}{n} \sum_{i=1}^n \dfrac{1}{nq_i}\|\nabla h_i(y_k)-\nabla h_i(\bar x)\|^2\\
	&\le \dfrac{2}{n} \sum_{i=1}^n \dfrac{1}{nq_i}\big(\|\nabla h_i(y_k)-\nabla h_i( x^\star)\|^2+\|\nabla h_i(\bar x)-\nabla h_i(x^\star)\|^2\big).\\
	\end{align*}
	If $\theta=0$ then $y_k=x_k$.  Using Lemma \ref{lm1dad}, we derive that
	\begin{align*}
	   	\E_{i_k}\left[\|z_k-\nabla h(y_k)\|^2\right] & \le \dfrac{2}{n} \sum_{i=1}^n \dfrac{1}{nq_i}\big(\|\nabla h_i(x_k)-\nabla h_i( x^\star)\|^2+\|\nabla h_i(\bar x)-\nabla h_i(x^\star)\|^2\big) \\
	   	& \le 4L_1\big( [G(x_k,v^\star)-G(x^\star,v^\star)]+G(\bar x,v^\star)-G(x^\star,v^\star) \big).
	\end{align*}
	By the same manner, we also have
	\begin{equation*}
	    \E_{j_k} \|t_k-\nabla \ell (u_k)\|^2 \le 4L_1 \big( [G(x^\star,v^\star)-G(x^\star,v_k)]+G( x^\star,v^\star)-G(x^\star,\bar v)\big).
	\end{equation*}
	Therefore, \eqref{e:var1} is proved. Now, let us consider the case $\theta =1$. We have
	\begin{align*}\E_{i_k}\left[\|z_k-\nabla h(y_k)\|^2\right]	&\le \dfrac{4}{n} \sum_{i=1}^n \dfrac{1}{nq_i} \big( \|\nabla h_i(y_k)-\nabla h_i(x_k)\|^2+ \|\nabla h_i(x_k)-\nabla h_i(x^\star)\|^2\big)\\
	&\ +\dfrac{2}{n} \sum_{i=1}^n \dfrac{1}{nq_i}\|\nabla h_i(\bar x)-\nabla h_i(x^\star)\|^2\\
	&\le \dfrac{4}{n} \sum_{i=1}^n \dfrac{\mu_i^2}{nq_i}\|x_k-x_{k-1}\|^2+8L_Q [G(x_k,v^\star)-G(x^\star,v^\star)]\\
	&\ \ +4L_Q[G(\bar x,v^\star)-G(x^\star,v^\star)]\\
	&\le 4 L_2  \|x_k-x_{k-1}\|^2+8L_1 [G(x_k,v^\star)-G(x^\star,v^\star)]\\
	&\ \ +4L_1[G(\bar x,v^\star)-G(x^\star,v^\star)].
	\end{align*}
	Similarly, we also have 
	\begin{equation*}
	    \E_{j_k}\left[ \|t_k-\nabla \ell (u_k)\|^2\right] \le 4 L_2  \|v_k-v_{k-1}\|^2+8L_1  [G(x^\star,v^\star)-G(x^\star,v_k)]+4L_1 [G( x^\star,v^\star)-G(x^\star,\bar v) ).
	\end{equation*}
	Hence, the proof is completed.
\end{proof}
\begin{lemma}\label{vietlai}
Suppose that Assumption \ref{giathiet} is satisfied. Let
 $(x_k)_{k\in\NN}, (y_k)_{k\in\NN},$
 $(u_k)_{k\in\NN}, (v_k)_{k\in\NN}$, $ (z_k)_{k\in\NN}$ $(t_k)_{k\in\NN}$ be sequences generated by Algorithm \ref{alg3} at the stage $s$. Let $	\xx^*=(x^*,v^*) \in \mathcal{S}$. Define
 	\begin{equation}
 	\big(\forall k\in\{0,\ldots m\}\big)
	\begin{cases}
	\xx_k &= (x_k,v_k),
	\yy_k = (y_k,u_k),\; \hat{\xx}_k= (\hat{x}_k,\hat{v}_k),\\
	\rr_k &= (z_k,t_k ),\\
	\mathsf{R}_k &= (\nabla h(y_k), \nabla \ell(u_k)),\\
		L\colon&\XX\times\YY^*\to\XX^*\times\YY\colon (x,v)\mapsto (K^*v,-Kx),\\
			    \varphi\oplus\psi\colon& (x,v)\mapsto \varphi(x)+\psi(v),\\
			    b_k &=\pair{L(\xx_{k}-\xx_{k-1})}{\xx_{k}-\xx^\star}.
\end{cases}
	\end{equation}
	Set $\mu_0=\max\ \{\mu,\nu \}$ and $ D= D_{\varphi\oplus\psi}.$
The following hold for any strictly positive $M$:
\begin{enumerate}
    \item\label{vietlai:i} If $\theta= 0$, then
    \begin{align}
    G(x_{k+1},v^*)-G(x^*,v_{k+1}) &\le 2\|K\| \big( M D(\xx_{k+1},\xx_k)+\dfrac{D(\xx^*,\xx_{k+1})}{M} \big)
    -\alpha D(\xx^*,\xx_{k+1}) \notag \\
    &\ +\dfrac{1}{\gamma} \big(D(\xx^*,\xx_k) - D(\xx^*,\xx_{k+1}) -D(\xx_{k+1},\xx_k) \big)  \notag \\
    &\ + \mu_0 D(\xx_{k+1},\xx_{k})  +\gamma \|\rr_k-\mathsf{R}_k\|^2+ \pair{\hat \xx_{k+1}-\xx^*}{\mathsf{R}_k-\rr_k}.\label{e:suachua1}
    \end{align}
    \item\label{vietlai:ii} If $ \theta=1$, then
    \begin{align}
 \gamma [ G(x_{k+1},v^\star)-G(x^\star,v_{k+1})] & \le D(\xx^\star,\xx_k) -\gamma b_k-D(\xx^\star,\xx_{k+1})+\gamma b_{k+1}\notag \\
    &-(1-2\gamma \mu_0-\gamma \|K\|)D(\xx_{k+1},\xx_k) \notag \\
    &+(2\gamma \mu_0 +\gamma \|K\|)D(\xx_{k},\xx_{k-1}) \notag \\
    &+\gamma^2 \|\rr_k-\mathsf{R}_k\|^2+\gamma \pair{\hat \xx_{k+1}-\xx}{\mathsf{R}_k-\rr_k}.\label{e:suachua2}
\end{align}
\end{enumerate}
\end{lemma}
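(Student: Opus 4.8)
The plan is to treat the two prox-updates of Algorithm \ref{alg3} as a single Bregman proximal step. Writing $\Phi=\varphi\oplus\psi$, $F=f\oplus g^*$, and recalling $\xx_k=(x_k,v_k)$, $\rr_k=(z_k,t_k)$, $\mathsf{R}_k=(\nabla h(y_k),\nabla\ell(u_k))$, the two lines defining $x_{k+1}$ and $v_{k+1}$ combine into $\xx_{k+1}=(\nabla\Phi+\gamma\partial F)^{-1}(\nabla\Phi(\xx_k)-\gamma\rr_k-\gamma L\yy_k)$, which is equivalent to the inclusion $\tfrac{1}{\gamma}(\nabla\Phi(\xx_k)-\nabla\Phi(\xx_{k+1}))-\rr_k-L\yy_k\in\partial F(\xx_{k+1})$. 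I would first record that the bilinear part of the gap is purely skew: using $L^*=-L$ and $\pair{L\aaa}{\aaa}=0$ one checks $\langle Kx_{k+1},v^\star\rangle-\langle Kx^\star,v_{k+1}\rangle=\pair{L\xx^\star}{\xx_{k+1}-\xx^\star}$. Hence $G(x_{k+1},v^\star)-G(x^\star,v_{k+1})$ decomposes into an $(f,g^*)$-part, an $(h,\ell)$-part, and this coupling.

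For the $(f,g^*)$-part I would apply Lemma \ref{lem123:1}\ref{lem123:1ii} to $F$ (with relative strong-convexity constant $\alpha$ when $\theta=0$, and $\alpha=0$ when $\theta=1$), which is legitimate since the added linear terms $\gamma\rr_k+\gamma L\yy_k$ do not alter relative strong convexity. Combined with the three-point identity Lemma \ref{lem123:1}\ref{lem123:1i} (with $x=\xx^\star$, $p=\xx_{k+1}$, $z=\xx_k$), this produces the telescoping block $\tfrac{1}{\gamma}(D(\xx^\star,\xx_k)-D(\xx^\star,\xx_{k+1})-D(\xx_{k+1},\xx_k))-\alpha D(\xx^\star,\xx_{k+1})$ together with the linear residual $-\pair{\rr_k+L\yy_k}{\xx_{k+1}-\xx^\star}$. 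For the $(h,\ell)$-part I would use the descent lemma for the $\mu_i,\nu_j$-Lipschitz gradients of (B1) plus convexity to obtain $\pair{\mathsf{R}_k}{\xx_{k+1}-\xx^\star}+\tfrac{\mu}{2}\|x_{k+1}-y_k\|^2+\tfrac{\nu}{2}\|v_{k+1}-u_k\|^2$, and then convert the squared norms into Bregman distances via the $1$-strong convexity of $\Phi$ in (B0). When $\theta=0$ ($\yy_k=\xx_k$) this yields $\mu_0 D(\xx_{k+1},\xx_k)$; when $\theta=1$, expanding $\yy_k=2\xx_k-\xx_{k-1}$ and using $\|a-b\|^2\le 2\|a\|^2+2\|b\|^2$ yields contributions in $D(\xx_{k+1},\xx_k)$ and $D(\xx_k,\xx_{k-1})$.

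The coupling is where the two cases genuinely differ. After merging the linear residual with $\pair{\mathsf{R}_k}{\cdot}$ and $\pair{L\xx^\star}{\cdot}$, the surviving bilinear quantity is $-\pair{L(\yy_k-\xx^\star)}{\xx_{k+1}-\xx^\star}$. For $\theta=0$ I would use skew-adjointness and $\pair{L\cdot}{\cdot}=0$ to rewrite it as $\pair{L(\xx_{k+1}-\xx_k)}{\xx_{k+1}-\xx^\star}$ and bound it by Cauchy--Schwarz, $\|L\|=\|K\|$, and Young's inequality with the free parameter $M$, giving $2\|K\|(M D(\xx_{k+1},\xx_k)+D(\xx^\star,\xx_{k+1})/M)$ after passing to Bregman distances. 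For $\theta=1$ I would expand $\yy_k-\xx^\star=(\xx_k-\xx^\star)+(\xx_k-\xx_{k-1})$: the first piece gives $-\pair{L(\xx_k-\xx^\star)}{\xx_{k+1}-\xx^\star}=\pair{L(\xx_{k+1}-\xx_k)}{\xx_{k+1}-\xx^\star}=b_{k+1}$, and the second gives $-\pair{L(\xx_k-\xx_{k-1})}{\xx_{k+1}-\xx^\star}=-b_k-\pair{L(\xx_k-\xx_{k-1})}{\xx_{k+1}-\xx_k}$, whose leftover inner product is bounded by $\|K\|(D(\xx_{k+1},\xx_k)+D(\xx_k,\xx_{k-1}))$. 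Collecting coefficients of $D(\xx_{k+1},\xx_k)$ and $D(\xx_k,\xx_{k-1})$ then reproduces \eqref{e:suachua1} and \eqref{e:suachua2}.

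Finally, the stochastic residual $\pair{\mathsf{R}_k-\rr_k}{\xx_{k+1}-\xx^\star}$ must be split so that its clean part vanishes under the conditional expectation used later (via the unbiasedness of Corollary \ref{lm1}). I would introduce the shadow iterate $\hat\xx_{k+1}=(\nabla\Phi+\gamma\partial F)^{-1}(\nabla\Phi(\xx_k)-\gamma\mathsf{R}_k-\gamma L\yy_k)$, which differs from $\xx_{k+1}$ only by using the exact gradients $\mathsf{R}_k$, and write $\pair{\mathsf{R}_k-\rr_k}{\xx_{k+1}-\xx^\star}=\pair{\mathsf{R}_k-\rr_k}{\hat\xx_{k+1}-\xx^\star}+\pair{\mathsf{R}_k-\rr_k}{\xx_{k+1}-\hat\xx_{k+1}}$. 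The first term is exactly the $\pair{\hat\xx_{k+1}-\xx^\star}{\mathsf{R}_k-\rr_k}$ of the statement. For the second, monotonicity of $\partial F$ and $1$-strong monotonicity of $\nabla\Phi$ yield $\|\xx_{k+1}-\hat\xx_{k+1}\|\le\gamma\|\rr_k-\mathsf{R}_k\|$, so Cauchy--Schwarz bounds it by $\gamma\|\rr_k-\mathsf{R}_k\|^2$ (becoming $\gamma^2\|\rr_k-\mathsf{R}_k\|^2$ after the overall multiplication by $\gamma$ in the $\theta=1$ case). The \emph{main obstacle} is precisely this last step together with the $\theta=1$ coupling manipulation: tracking the skew-symmetry signs so that the $b_k$-telescoping emerges correctly, and establishing the resolvent-stability estimate $\|\xx_{k+1}-\hat\xx_{k+1}\|\le\gamma\|\rr_k-\mathsf{R}_k\|$ on which the whole variance-reduction argument rests.
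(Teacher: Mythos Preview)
Your proposal is correct and follows essentially the same route as the paper's proof: the paper also uses the Bregman proximal inequality (Lemma~\ref{lem123:1}\ref{lem123:1ii}) on each update, the descent lemma for $h$ and $\ell$, the skew structure of $L$ to produce the $b_k$ telescoping (respectively the Young bound with parameter $M$), and the shadow iterate $\hat\xx_{k+1}$ together with $1$-strong monotonicity of $\nabla\Phi+\gamma\partial F$ to obtain $\|\xx_{k+1}-\hat\xx_{k+1}\|\le\gamma\|\rr_k-\mathsf{R}_k\|$. The only cosmetic difference is that the paper first derives the estimates coordinate-wise (separately for the $x$- and $v$-components, see \eqref{2d}--\eqref{2b}) and then merges them into the product-space form \eqref{e:gap}, whereas you work directly in $\XX\times\YY^*$ from the outset; the bilinear identities you use for the coupling term are exactly those appearing in \eqref{inedad9} and \eqref{e:dvd0}.
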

\begin{proof} Let $ k\in\{0,\ldots, m-1\}$. 
 We have
		$v_{k+1}=(\nabla \psi + \gamma \partial g^*)^{-1}(\nabla \psi(v_k)-\gamma t_k+\gamma K y_k)$, which is equivalent to
		$$K y_k-t_k+\dfrac{1}{\gamma} (\nabla \psi(v_k)-\nabla \psi(v_{k+1})) \in \partial g^*(v_{k+1}).$$
	Since $g^*$ is $\alpha$-strongly convex relative to $\psi$, which implies that
		$$ g^*(v) \ge g^*(v_{k+1})+ \pair{Ky_k-t_k+\dfrac{1}{\gamma} (\nabla \psi(v_k)-\nabla \psi(v_{k+1}))}{v-v_{k+1}} +\alpha D_{\psi}(v,v_{k+1}).$$
		Using Lemma \ref{lem123:1} \ref{lem123:1i}, we have
		\begin{align}
		    g^*(v_{k+1})-g^*(v) &\le \pair{t_k-Ky_k}{v-v_{k+1}}+\dfrac{1}{\gamma} \pair {\nabla \psi(v_k)-\nabla \psi(v_{k+1})}{v_{k+1}-v}-\alpha D_{\psi}(v,v_{k+1})\notag \\
		    &=\pair{t_k-Ky_k}{v-v_{k+1}}+\dfrac{1}{\gamma} (D_\psi(v,v_k)-D_\psi (v_{k+1},v_k)-D_\psi(v,v_{k+1}))\notag\\
		    &\quad-\alpha D_{\psi}(v,v_{k+1}). \label{d1}
		\end{align}
			Since $\ell$ is convex differentiable with 
	$\nu$-Lipschitz gradient, we have 
	\begin{equation}
	\ell(v_{k+1})-\ell(v)
	\leq \pair{v_{k+1}-v}{ \nabla \ell(u_k)} +\dfrac{\nu}{2}\|v_{k+1}-u_k\|^2. \label{d2}
	\end{equation}
	 We derive from \eqref{d1} and \eqref{d2} that
	\begin{align}
	  	G(x_{k+1},v)-&G(x_{k+1},v_{k+1})=\pair{K x_{k+1}}{v-v_{k+1}}  -g^*(v)+g^*(v_{k+1})-\ell(v)+\ell(v_{k+1}) \notag \\
	&\le \pair {K(x_{k+1}-y_k)}{v-v_{k+1}} +\dfrac{1}{\gamma}(D_{\psi}(v,v_k)-D_{\psi}(v_{k+1},v_k)-D_{\psi}(v,v_{k+1})) \notag \\
	& \ \ + \frac{\nu}{2}\|v_{k+1}-u_k\|^2 + \pair{ \nabla \ell(u_k)-t_k}{v_{k+1}-v}-\alpha D_{\psi}(v,v_{k+1}).  \label{2d}
	\end{align}
	Similar to \eqref{2d}, we have,
		\begin{align}
	G(x_{k+1},v_{k+1})-&G(x,v_{k+1})=h(x_{k+1})-h(x)+\pair{ K(x_{k+1}-x)}{v_{k+1}}+f(x_{k+1})-f(x) \notag \\
	&\le \pair{ K(x_{k+1}-x)}{v_{k+1}-u_k}+\frac{1}{\gamma} (D_{\varphi}(x,x_k)-D_{\varphi}(x_{k+1},x_k)-D_{\varphi}(x,x_{k+1})) \notag \\
	&\ \ +\dfrac{\mu}{2}\|x_{k+1}-y_k\|^2 
	+ \pair{x_{k+1}-x}{ \nabla h(y_k)-z_k}-\alpha D_{\varphi}(x,x_{k+1}). \label{2b}
	\end{align}
	Adding \eqref{2d} and \eqref{2b}, we obtain
\begin{align}
    &G(x_{k+1},v)-G(x,v_{k+1}) \le \big(\pair{K(x_{k+1}-x)}{v_{k+1}-u_k}+ \pair {K(x_{k+1}-y_k)}{v-v_{k+1}} \big)\notag \\
    &\ +\dfrac{1}{\gamma} \bigg(D_{\varphi}(x,x_k)-D_{\varphi}(x_{k+1},x_k)-D_{\varphi}(x,x_{k+1})+D_{\psi}(v,v_k)-D_{\psi}(v_{k+1},v_k)-D_{\psi}(v,v_{k+1}) \bigg) \notag \\
    &\ +\dfrac{\mu}{2}\|x_{k+1}-y_k\|^2+\dfrac{\nu}{2}\|v_{k+1}-u_k\|^2+\pair{x_{k+1}-x}{ \nabla h(y_k)-z_k}+\pair{ \nabla \ell(u_k)-t_k}{v_{k+1}-v}\notag\\
    &-\alpha D_{\varphi}(x,x_{k+1})-\alpha D_{\psi}(v,v_{k+1}).
    \label{e:gap}
\end{align}
Let us set
	\begin{equation}\label{e:mas1}
	\begin{cases}
	\hat x_{k+1}&= (\nabla \varphi + \gamma \partial f)^{-1}(\nabla \varphi(x_k)-\gamma \nabla h(y_k)-\gamma K^* u_k),\\
	\hat v_{k+1}&=(\nabla \psi + \gamma \partial g^*)^{-1}(\nabla \psi(v_k)-\gamma \nabla \ell(u_k)+\gamma K y_k).
	\end{cases}
	\end{equation}
	Then, the first equation in \eqref{e:mas1} is equivalent to 
	\begin{equation}
	     \nabla \varphi(x_k)  -\gamma \nabla h(y_k)-\gamma K^* u_k \in \gamma \partial f(\hat x_{k+1})  +\nabla\varphi(\hat x_{k+1}).\label{e:mas2}
	\end{equation}
	Since $\varphi$ is $1$-strongly convex, $\gamma \partial f+ \nabla \varphi$ is $1$-strongly monotone. Hence, it follows from \eqref{e:mas2} that
	\begin{align}
	    \|\hat x_{k+1}-x_{k+1}\|^2 &\leq \pair{\hat x_{k+1}-x_{k+1}}{ \nabla \varphi(x_k)  -\gamma \nabla h(y_k)-\gamma K^* u_k +\gamma z_k+\gamma K^* u_k -\nabla \varphi(x_k) }\notag\\
	    &= \gamma \pair{\hat x_{k+1}-x_{k+1}}{\nabla h(y_k)-z_k },
	\end{align}
	which implies that 
	\begin{equation}
	    \|\hat x_{k+1}-x_{k+1}\| \leq \gamma \|z_k-\nabla h(y_k)\|.
	\end{equation}
	In turn,
	\begin{align}
	&\pair{x_{k+1}-x}{ \nabla h(y_k)-z_k} \notag\\
	&\quad = \pair{x_{k+1}-\hat x_{k+1}}{\nabla h(y_k)-z_k}+
	\scal{\hat x_{k+1}-x }{\nabla h(y_k)-z_k} \notag \\
	&\ \ \ \le \|z_k-\nabla h(y_k)\|\|x_{k+1}-\hat x_{k+1}\|+\pair{\hat x_{k+1}-x}{ \nabla h(y_k)-z_k} \notag \\
	&\ \ \ \le  \gamma \|z_k-\nabla h(y_k)\|^2+ \pair{\hat x_{k+1}-x}{ \nabla h(y_k)-z_k}. \label{4}
	\end{align}
	By the same way,
	\begin{align}
	\pair{\nabla \ell(u_k)-t_k}{v_{k+1}-v}  \le   \gamma \|t_k-\nabla \ell(u_k)\|^2+\pair{\nabla \ell(u_k)-t_k}{\hat v_{k+1}-v}\label{4'}.
	\end{align}
	Let us define $\XX\times\YY^*$ the standard product space equipped with the norm 
	$(x,v)\mapsto \sqrt{\|x\|^2 + \|v\|^2}$.
	Then $\|L\|=\|K\|$ and $\nabla (\varphi\oplus\psi)(x,v) = (\nabla\varphi(x),\nabla \varphi(v))$. Hence, for every $(x,v)$ and $(y,w)$ in $\XX\times\YY^*$, 
	\begin{align}
	    D_{\varphi}(x,y) + D_{\psi}(v,w) &= \varphi(x) + \psi(v) - (\varphi(y)+ \psi(w)) - 
	    (\pair{x-y}{\nabla\varphi(x)} +\pair{v-w}{\nabla\psi(w)})\notag\\
	    &=  \varphi\oplus\psi (x,v) +  \varphi\oplus\psi (y,w) -\pair{(x,v)-(y,w)}{\nabla (\varphi\oplus\psi)(x,v)}\notag\\
	    &= D((x,v),(y,w)).
	\end{align} 
 In turn,  the term in the second line in \eqref{e:gap} can be rewritten as 
	\begin{align}
	    \dfrac{1}{\gamma} \big(D_{\varphi}(x,x_k)-D_{\varphi}(x_{k+1},x_k)-&D_{\varphi}(x,x_{k+1})+D_{\psi}(v,v_k)-D_{\psi}(v_{k+1},v_k)-D_{\psi}(v,v_{k+1}) \big) \notag\\
	    & =  \dfrac{1}{\gamma} \big(D_{\varphi\oplus\psi}(\xx,\xx_k) - D_{\varphi\oplus\psi}(\xx,\xx_{k+1}) -D_{\varphi\oplus\psi}(\xx_{k+1},\xx_k) \big) \notag \\
	      & =  \dfrac{1}{\gamma} \big(D(\xx,\xx_k) - D(\xx,\xx_{k+1}) -D(\xx_{k+1},\xx_k) \big)\label{e:dvd1}.
	\end{align}
	(i). Let us consider the case when
	$\theta= 0$. we can rewrite \eqref{e:gap}:
	\begin{align}
    G(x_{k+1},v)-G(x,v_{k+1}) &\le \big(\pair{K(x_{k+1}-x)}{v_{k+1}-v_k}+ \pair {K(x_{k+1}-x_k)}{v-v_{k+1}} \big)\notag \\
    &\ +\dfrac{1}{\gamma} \bigg(D(\xx,\xx_k)-D(\xx_{k+1},\xx_k)-D(\xx,\xx_{k+1}) \bigg) \notag \\
    &\ +\dfrac{\mu}{2}\|x_{k+1}-x_k\|^2+\dfrac{\nu}{2}\|v_{k+1}-v_k\|^2-\alpha D(\xx,\xx_{k+1})\notag \\
    &+\pair{x_{k+1}-x}{ \nabla h(x_k)-z_k}+\pair{ \nabla \ell(v_k)-t_k}{v_{k+1}-v}    .
    \label{e:gapd}
\end{align}
	The term in the first and the third line in \eqref{e:gapd} can be estimated:
	\begin{align}
	    \pair{K(x_{k+1}-x_k)}{v-v_{k+1}}&+\pair{K(x_{k+1}-x)}{v_{k+1}-v_k} \notag \\
	   	    & \le \|K\| \bigg(M \|\xx_{k+1}-\xx_k\|^2+\dfrac{\|\xx_{k+1}-\xx\|^2}{M} \bigg) \notag \\
	    &\le 2\|K\| \big( M D(\xx_{k+1},\xx_k)+\dfrac{D(\xx,\xx_{k+1})}{M}\big). \label{inedad9}
	\end{align}
	Since $\varphi\oplus\psi$ is $1$-strongly convex, we have
	\begin{align}
	  \dfrac{\mu }{2}\|x_{k+1}-x_k\|^2 + \frac{\nu }{2}\|v_{k+1}-v_k\|^2 &\leq \mu_0 D(\xx_{k+1},\xx_k).
	\end{align}
	Therefore, using \eqref{4} and \eqref{4'}, we derive from \eqref{e:gapd} that
		\begin{align}
    G(x_{k+1},v)-G(x,v_{k+1}) &\le 2\|K\| \big( M D(\xx_{k+1},\xx_k)+\dfrac{D(\xx,\xx_{k+1})}{M} \big)
    -\alpha D(\xx,\xx_{k+1}) \notag \\
    &\ +\dfrac{1}{\gamma} \big(D(\xx,\xx_k) - D(\xx,\xx_{k+1}) -D(\xx_{k+1},\xx_k) \big)  \notag \\
    &\ + \mu_0 D(\xx_{k+1},\xx_{k})  +\gamma \|\rr_k-\mathsf{R}_k\|^2+ \pair{\hat \xx_{k+1}-\xx}{\mathsf{R}_k-\rr_k},\label{e:oz}
\end{align}
which is \eqref{e:suachua1}.\\
(ii). We next consider the case
 $\theta=1. $
	The term in the first line in \eqref{e:gap} with $x=x^\star,\ v=v^\star$ can be estimated as
	\begin{align}
	&\pair{K(x_{k+1}-y_k)}{v^\star-v_{k+1}}+\scal{K(x_{k+1}-x^\star)}{v_{k+1}-u_k}  \notag\\
	&\quad= \pair{L(\xx_{k+1}-\xx_k)}{\xx_{k+1}-\xx^\star}
	-  \pair{L(\xx_{k}-\xx_{k-1})}{\xx_{k}-\xx^\star}
	-  \pair{L(\xx_k- \xx_{k-1})}{\xx_{k+1}-\xx_k}\notag\\
	&\quad \leq b_{k+1}-  b_k + \dfrac{\|K\|}{2} \big(\|\xx_k- \xx_{k-1})\|^2
	+ \|\xx_{k+1}-\xx_k\|^2\big)\notag \\
	&\quad \le b_{k+1}- b_k+ \|K\|\big(D(\xx_{k},\xx_{k-1}) +D(\xx_{k+1},\xx_k) \big).
		\label{e:dvd0}
	\end{align}
	Using the triangle inequality and the strong convexity of $\varphi$ and $\psi$, we obtain
	\begin{align}
	\dfrac{\mu }{2}\|x_{k+1}-y_k\|^2 + \frac{\nu }{2}\|v_{k+1}-u_k\|^2 &\leq \mu_0 \big(\|\xx_k-\xx_{k+1}\|^2 +\|\xx_k-\xx_{k-1}\|^2 \big)\notag\\
	&\leq 2\mu_0 (D(\xx_{k+1},\xx_{k}) +D(\xx_{k},\xx_{k-1})).
	\label{e:dvd2}
	\end{align}
	We  can estimate the two last term in the third line of \eqref{e:gap} as  
	\begin{align}
	    \pair{x_{k+1}-x}{ \nabla h(y_k)-z_k}+\pair{ \nabla \ell(u_k)-t_k}{v_{k+1}-v} \le \gamma \|\rr_k-\mathsf{R}_k\|^2+ \pair{\hat \xx_{k+1}-\xx}{\mathsf{R}_k-\rr_k}. \label{e:dvd3}
	\end{align}
		Therefore, inserting \eqref{e:dvd0},  \eqref{e:dvd1}, \eqref{e:dvd2} and \eqref{e:dvd3} into 
	\eqref{e:gap}, we get
	\begin{align}
    G(x_{k+1},v^\star)-G(x^\star,v_{k+1}) &\le b_{k+1}-  b_k+ \|K\| \big(D(\xx_{k},\xx_{k-1}) +D(\xx_{k+1},\xx_k) \big) \notag \\
    &\ +\dfrac{1}{\gamma} \big(D(\xx^\star,\xx_k) - D(\xx^\star,\xx_{k+1}) -D(\xx_{k+1},\xx_k) \big)  \notag \\
    &\ +2\mu_0 (D(\xx_{k+1},\xx_{k}) +D(\xx_{k},\xx_{k-1}))\notag \\
    &\ +\gamma \|\rr_k-\mathsf{R}_k\|^2+ \pair{\hat \xx_{k+1}-\xx}{\mathsf{R}_k-\rr_k},
\end{align}
which implies that
\begin{align}
 \gamma [ G(x_{k+1},v^\star)-G(x^\star,v_{k+1})] & \le D(\xx^\star,\xx_k) -\gamma b_k-D(\xx^\star,\xx_{k+1})+\gamma b_{k+1}\notag \\
    &-(1-2\gamma \mu_0-\gamma \|K\|)D(\xx_{k+1},\xx_k) \notag \\
    &+(2\gamma \mu_0 +\gamma  \|K\|)D(\xx_{k},\xx_{k-1}) \notag \\
    &+\gamma^2 \|\rr_k-\mathsf{R}_k\|^2+\gamma \pair{\hat \xx_{k+1}-\xx}{\mathsf{R}_k-\rr_k},\label{inedad10}
\end{align}
which proves \eqref{e:suachua2}.
\end{proof}

The convergence of Agorithm \ref{alg3} depends on the choices of $\theta$
and $(\omega_k)_{k\in\NN}$.  
We first main convergence result can be now stated where we prove the sublinear convergence rate in expeactation of the primal-dual gap for general convex case, i.e. $\alpha=0$.

\begin{theorem}\label{t:2}
Let $(\bar x_s)_{s \in \NN}, \ (\bar v_s)_{s \in \NN}$ be generated by Algorithm \ref{alg3} with $\theta= 1,\omega_k = 1/m$.
Suppose that Assumption 2.5 and Assumption 2.6 are satisfied. Assume that
\begin{align}\label{dkg}
   \begin{cases}
   12\gamma L_1 &< 1 \\
   4\gamma \mu_0+2\gamma \|K\|+8L_2 \gamma^2 &\le 1.
   \end{cases} 
\end{align}
For every $N\in\NN$, define 
\begin{equation}
	\hat{x}_N= \big(\sum_{s=1}^N \bar x_{s}\big)/N; \text{and}\;
	\ \hat {v}_N=\big(\sum_{s=1}^N \bar v_{s}\big)/N.
	\end{equation}
	Then we have 
 \begin{equation}
 \EE [G(\hat{x}_{N},v^\star)-G(x^\star,\hat{v}_{N})] \le \dfrac{D( \xx^\star,\bar \xx_0)+4L_1 \gamma^2(m+2)\big[G(\bar x_0,v^\star)-G(x^\star,\bar v_0)\big]}{m \gamma (1-12L_1 \gamma)N}.
 \end{equation}
 \end{theorem}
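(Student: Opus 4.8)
The plan is to promote the one-step bound \eqref{e:suachua2} (the case $\theta=1$) into a two-level telescoping: an inner sum over $k=0,\dots,m-1$ inside a fixed stage $s$, then an outer sum over the stages $s=0,\dots,N-1$. Write $\Delta_k=G(x_k,v^\star)-G(x^\star,v_k)$, which is $\ge 0$ by the saddle inequality \eqref{e:saddle}, $\bar\Delta_s=G(\bar x_s,v^\star)-G(x^\star,\bar v_s)$, and $D=D_{\varphi\oplus\psi}$; I decorate inner quantities with a stage superscript, e.g. $\xx_k^s$, $b_k^s$, so that the update rule yields the crucial identities $\xx_0^{s+1}=\xx_m^s$, $\xx_{-1}^{s+1}=\xx_{m-1}^s$, whence $b_0^{s+1}=b_m^s$, $D(\xx_0^{s+1},\xx_{-1}^{s+1})=D(\xx_m^s,\xx_{m-1}^s)$ and $\Delta_0^{s+1}=\Delta_m^s$. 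First I would take the conditional expectation $\E_{i_k,j_k}$ of \eqref{e:suachua2}. The point here is that $\hat\xx_{k+1}$ in \eqref{e:mas1} depends only on $\xx_k,\yy_k$, so it is measurable before $(i_k,j_k)$ is drawn; together with the unbiasedness \eqref{e:unbiased1} this kills the stochastic inner product, $\E_{i_k,j_k}\pair{\hat\xx_{k+1}-\xx^\star}{\mathsf{R}_k-\rr_k}=0$. For $\gamma^2\E\|\rr_k-\mathsf{R}_k\|^2$ I would insert the $\theta=1$ variance estimate \eqref{e:var2}, replace $\|\xx_k-\xx_{k-1}\|^2$ by $2D(\xx_k,\xx_{k-1})$ using the $1$-strong convexity of $\varphi\oplus\psi$, and recombine the primal and dual Bregman-gap pieces as $8L_1\gamma^2\Delta_k+4L_1\gamma^2\bar\Delta_s$. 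This produces a one-step inequality in which $D(\xx_{k+1},\xx_k)$ carries coefficient $-(1-2\gamma\mu_0-\gamma\|K\|)$ and $D(\xx_k,\xx_{k-1})$ carries coefficient $c_2:=2\gamma\mu_0+\gamma\|K\|+8L_2\gamma^2$.

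Summing over $k$ I would telescope the $\pm D(\xx^\star,\cdot)$ and $\pm\gamma b_k$ pairs and combine the consecutive increment terms. The second line of \eqref{dkg} is used precisely here: it makes the interior coefficient $c_2-(1-2\gamma\mu_0-\gamma\|K\|)\le 0$, so only the two boundary increments survive, and it gives $c_1:=1-2\gamma\mu_0-\gamma\|K\|\ge c_2$. I would not bound $\gamma b_m^s$ away but instead carry it, along with $D(\xx_m^s,\xx_{m-1}^s)$ and the terminal gap $\Delta_m^s$, into the next stage through the identities above. After moving the variance gap $8L_1\gamma^2\sum_{k=0}^{m-1}\Delta_k$ to the left, bounding the retained gap coefficients below by $\gamma(1-8L_1\gamma)$, and invoking the convexity of $G(\cdot,v^\star)$, concavity of $G(x^\star,\cdot)$ together with $\omega_k=1/m$ to get $\sum_{k=1}^m\Delta_k\ge m\bar\Delta_{s+1}$, I obtain the per-stage recursion
\begin{equation*}
\gamma(1-8L_1\gamma)m\,\E[\bar\Delta_{s+1}]+\E[\hat\Phi_{s+1}]\le \hat\Phi_s+4L_1\gamma^2 m\,\bar\Delta_s,
\end{equation*}
with the Lyapunov functional $\hat\Phi_s:=D(\xx^\star,\xx_0^s)-\gamma b_0^s+c_2\,D(\xx_0^s,\xx_{-1}^s)+8L_1\gamma^2\Delta_0^s$, where $c_1\ge c_2$ is exactly what lets the $D(\xx_m^s,\xx_{m-1}^s)$ and terminal-gap terms close up into $\hat\Phi_{s+1}$.

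Finally I would sum this over $s=0,\dots,N-1$. The $\hat\Phi_s$ telescope to $\hat\Phi_0-\E[\hat\Phi_N]$; the initialization $\xx_0^0=\xx_{-1}^0=\bar\xx_0$ gives $b_0^0=0$, $D(\xx_0^0,\xx_{-1}^0)=0$, $\Delta_0^0=\bar\Delta_0$, whence $\hat\Phi_0=D(\xx^\star,\bar\xx_0)+8L_1\gamma^2\bar\Delta_0$, while $\hat\Phi_N\ge 0$ follows from Young's inequality, $1$-strong convexity, $\gamma\|K\|\le 1/2$ and $c_2\ge\gamma\|K\|$, so $\E[\hat\Phi_N]$ may be dropped. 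Estimating $\sum_{s=0}^{N-1}\bar\Delta_s\le\bar\Delta_0+\sum_{s=1}^N\bar\Delta_s$ and collecting terms leaves the coefficient $\gamma m(1-8L_1\gamma)-4L_1\gamma^2 m=\gamma m(1-12L_1\gamma)$ in front of $\sum_{s=1}^N\E[\bar\Delta_s]$, so the first line of \eqref{dkg} keeps it positive, and the constant $D(\xx^\star,\bar\xx_0)+4L_1\gamma^2(m+2)\bar\Delta_0$ on the right, the $(m+2)$ being $m$ from the anchor variance plus $2$ from the telescoped terminal gap. One more application of the convexity/concavity of $G$ to the outer average, giving $G(\hat x_N,v^\star)-G(x^\star,\hat v_N)\le\frac1N\sum_{s=1}^N\bar\Delta_s$, and division by $N$ deliver the claim.

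The step I expect to be hardest is assembling the correct Lyapunov functional $\hat\Phi_s$: one must realize that not only the reflected term $b_k$ and the last increment $D(\xx_m^s,\xx_{m-1}^s)$, but also the terminal gap $\Delta_m^s=\Delta_0^{s+1}$, have to be carried across stages, and that the distinction between the terminal iterate $\xx_m^s$ (which seeds stage $s+1$) and the averaged anchor $\bar\xx_{s+1}$ (used in the variance reduction) is exactly what generates the factors $(1-12L_1\gamma)$ and $(m+2)$. Condition \eqref{dkg} is tuned so that simultaneously the interior increments cancel, $c_1\ge c_2$ makes $\hat\Phi_s$ telescope, and $\hat\Phi_N\ge0$; verifying these three roles of the stepsize condition, together with the measurability argument that annihilates the stochastic cross term, is where the care is needed.
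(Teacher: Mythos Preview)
Your proposal is correct and follows essentially the same route as the paper's proof. Your Lyapunov functional $\hat\Phi_s=D(\xx^\star,\xx_0^s)-\gamma b_0^s+c_2\,D(\xx_0^s,\xx_{-1}^s)+8L_1\gamma^2\Delta_0^s$ coincides with the paper's $e_0^s+8L_1\gamma^2 T_0^s$, the index shift that converts $8L_1\gamma^2\sum_{k=0}^{m-1}\Delta_k$ into $8L_1\gamma^2\sum_{k=1}^{m}\Delta_k+8L_1\gamma^2(\Delta_0-\Delta_m)$ is exactly what the paper does, and the roles you assign to the two lines of \eqref{dkg} (canceling interior increments and ensuring $\hat\Phi_s\ge0$; keeping the coefficient $\gamma m(1-12L_1\gamma)$ positive) and your derivation of the $(m+2)$ constant match the paper line by line.
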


\begin{proof}Let $s\in\NN$, consider the stage $s$,
using Lemma \ref{vietlai} \ref{vietlai:ii} with $\alpha=0$, 
we have 
\begin{align}
 \gamma [ G(x_{k+1},v^\star)-G(x^\star,v_{k+1})] & \le D(\xx^\star,\xx_k) -\gamma  b_k-D(\xx^\star,\xx_{k+1})+\gamma b_{k+1}\notag \\
    &-(1-2\gamma \mu_0-\gamma \|K\|)D(\xx_{k+1},\xx_k) \notag \\
    &+(2\gamma \mu_0 +\gamma  \|K\|)D(\xx_{k},\xx_{k-1}) \notag \\
    &+\gamma^2 \|\rr_k-\mathsf{R}_k\|^2+\gamma \pair{\hat \xx_{k+1}-\xx}{\mathsf{R}_k-\rr_k}.\label{e:suachua2a}
\end{align}
Set \begin{equation}
    T_k= G(x_k,v^\star)-G(x^\star,v_k)\geq 0.
\end{equation}
Let us next denote $\xi_k = (i_k,j_k)$ and $\xi_{[k]}$ denote that history $\{\xi_0,\xi_1,\ldots, \xi_{k}\}$.  We also denote by $\E_{\xi_k}$ the conditional 
expectation with respect to $\xi_{[k-1]}$.
It follows that
    \begin{align}
	\gamma \EE_{\xi_k} \big[T_{k+1}\big] 	&\le D(\xx^\star,\xx_k) -\gamma  b_k-\E_{\xi_k} D(\xx^\star,\xx_{k+1})+\gamma \E_{\xi_k} b_{k+1}\notag -(1-2\gamma \mu_0-\gamma \|K\|) \E_{\xi_k}D(\xx_{k+1},\xx_k) \notag \\
    &\ \ +(2\gamma \mu_0 +\gamma \|K\|)D(\xx_{k},\xx_{k-1}) +\gamma^2 \E_{\xi_k}\|\rr_k-\mathsf{R}_k\|^2. \label{e:cv}
    \end{align}
 Moreover, using \eqref{e:var2} in  Corollary \ref{lm1}, 
 \begin{equation}
     \E_{\xi_k}\|\rr_k-\mathsf{R}_k\|^2 \leq 8L_2 D(\xx_k,\xx_{k-1}) + 8L_1 T_k + 4L_1 G(\bar x,v^\star)-G(x^\star,\bar v). \label{ineE}
 \end{equation}
Therefore, we derive from \eqref{e:cv}, \eqref{ineE} and \eqref{dkg} that  
    \begin{align}
 \gamma \EE_{\xi_k} \big[T_{k+1}\big] 
	&\le \big[D(\xx^\star,\xx_k)+(2\gamma\mu_0 +\gamma  \|K\|+ 8L_2 \gamma^2) D(\xx_k,\xx_{k-1})-\gamma b_k \big]\notag \\
	&\ \ -\EE_{\xi_k} \big[D(\xx^\star,\xx_{k+1})+(2\gamma\mu_0 +\gamma \|K\|+8L_2 \gamma^2 ) D(\xx_{k+1},\xx_{k})-\gamma b_{k+1} \big] \notag \\
	&\ \ +8L_1 \gamma^2 T_k+4L_1 \gamma^2 \big(G(\bar x,v^\star)-G(x^\star,\bar v)\big) \notag \\
	&\le e_k-\E_{\xi_k}e_{k+1}+8L_1 \gamma^2 T_k  +4L_1 \gamma^2 \big(G(\bar x_s,v^\star)-G(x^\star,\bar v_s)\big), \label{tkd}
	\end{align}
where we set $$ e_k=D(\xx^\star,\xx_k)+(2\gamma\mu_0  +\gamma \|K\|+8L_2 \gamma^2 ) D(\xx_k,\xx_{k-1})-\gamma b_k.$$
We have 
\begin{align}
    |b_k| &\le \|L\| \| \xx_{k}-\xx_{k-1}\| \|\xx_{k}-\xx^\star\| \notag \\
    &\le \dfrac{\|K\|}{2} \big(\| \xx_{k}-\xx_{k-1}\|^2 +\|\xx_{k}-\xx^\star\|^2 \big). \notag
\end{align}
So the condition \eqref{dkg} implies that $e_k \ge 0 \ \ \forall k \in \NN.$  Taking the expectation with respect to all the history  in the stage $s$ and summing the inequality \eqref{tkd} from $k=0$ to $k=m-1$, we obtain	
\begin{align}
 \gamma \sum \limits_{k=0}^{m-1} \EE [ T_{k+1}] &\le e_0-\E e_m+8L_1 \gamma^2 \sum \limits_{k=0}^{m-1} \EE [T_k] 
+4mL_1 \gamma^2\big[G(\bar x_s,v^\star)-G(x^\star,\bar v_s)\big] \notag \\
 &\le (e_0+8L_1\gamma^2 T_0)-(\E e_m+8L_1 \gamma^2 \E[T_m])+8L_1 \gamma^2 \sum \limits_{k=1}^{m} \EE [T_k] \notag \\
 &\ \  +4mL_1 \gamma^2\big[G(\bar x_s,v^\star)-G(x^\star,\bar v_s)\big].
\end{align}
In turn,
\begin{align*}
   \gamma (1-8L_1 \gamma)\sum \limits_{k=1}^{m} \EE [T_k] &\le (e_0+8L_1\gamma^2 T_0)-(\E e_m+8L_1 \gamma^2 \E[T_m])\notag \\
   &\ \ +4mL_1 \gamma^2\big[G(\bar x_s,v^\star)-G(x^\star,\bar v_s)\big].
\end{align*}
Using the convexity-concavity of $G$, we obtain
\begin{align} \label{dads}
     m \gamma(1-8L_1 \gamma) \E [G(\bar x_{s+1},v^\star)-G(x^\star,\bar v_{s+1})] &\le (e_0+8L_1\gamma^2 T_0)-(\E e_m+8L_1 \gamma^2 \E[T_m]) \notag \\
    & \ \ +4mL_1 \gamma^2\big[G(\bar x_s,v^\star)-G(x^\star,\bar v_s)\big].
\end{align}
Note that by the choices of $(x_{0}^s)_{s\in\NN}$ and $(x_{-1}^s)_{s\in\NN}$, we have  
$$\E e_m+8L_1 \gamma^2 \E[T_m] = \EE e^{s+1}_0+8L_1\gamma^2 \EE T^{s+1}_0.$$
Summing \eqref{dads} from $s=0$ to $s=N-1$, we derive
\begin{align}
    m \gamma (1-8L_1 \gamma)\sum \limits_{s=1}^N \E [G(\bar x_s,v^\star)-G(x^\star,\bar v_s)]& \le (e^0_0+8L_1\gamma^2 T^0_0)-(\E e^{N-1}_m+8L_1 \gamma^2 \E[T^{N-1}_m]) \notag \\
    &+4mL_1 \gamma^2 \sum \limits_{s=0}^{N-1} \E [G(\bar x_s,v^\star)-G(x^\star,\bar v_s)],
\end{align}
which implies that
\begin{align}
   m \gamma (1-12L_1 \gamma)\sum \limits_{s=1}^N \E [G(\bar x_s,v^\star)-G(x^\star,\bar v_s)]  &\le   (e^0_0+8L_1\gamma^2 T^0_0)-(\E e^{N-1}_m+8L_1 \gamma^2 \E[T^{N-1}_m]) \notag \\
   &\ \ +4mL_1 \gamma^2 [G(\bar x_0,v^\star)-G(x^\star,\bar v_0)] \notag \\
   &\le D( \xx^\star,\bar \xx_0)+4L_1 \gamma^2(m+2)\big[G(\bar x_0,v^\star)-G(x^\star,\bar v_0)\big]. \notag
    \end{align}
$$\Rightarrow \E [G(\hat x_N,v^\star)-G(x^\star,\hat v_N)] \le \dfrac{D( \xx^\star,\bar \xx_0)+4L_1 \gamma^2(m+2)\big[G(\bar x_0,v^\star)-G(x^\star,\bar v_0)\big] }{m\gamma (1-12L_1 \gamma)N}.$$
The proof is completed.

\end{proof}

\begin{remark} Recently, there appeared various publications where the convex-concave saddle problems were investigated in stochastic setting; see \cite{Bach1,Chen14,Com02,Dung20,bang1,bang4,Arizona,Nem09,Jud11,Shi17,Zhao19,Wang17} for instances and the reference therein. These existing methods are different 
from our proposed algorithm. Thus
we highlight  several works in which the convergence of the gap functions were investigated for non-strongly convex-concave problems.
\begin{enumerate}
    \item Theorem \ref{t:2} improves the result in \cite[Section 4]{Dung20} 
in which the variance reduction technique as well as the Bregman distances were not exploited. Further connections to existing works \cite{bang1,sva2,Nem09} can be found in \cite{Dung20}.
\item
Theorem \ref{t:2} showed that the ergodic convergence rate in expectation of the primal dual gap is $\mathcal{O}(1/N)$ in which the constant is independent of the problem size $n$. This rate was also achieved in \cite{Arizona}
with a different method but the constant depends on $n$. Note that the rate $\mathcal{O}(1/N)$ is as fast as the several   deterministic methods \cite{te15,Bot2014,Bot2015,Cham16,ar1}.  
\item The rate $\mathcal{O}{(1/\sqrt{N}})$ of the the ergodic convergence rate in expectation of the primal dual gap was obtained in \cite{Nem09,Jud11,Zhao19}.
\end{enumerate}

\end{remark}

We next show that one can improve the rate of the gap function from the sublinear convergence to linear convergence when the $f$ and $g^*$ are strongly convex relative to $\varphi$ and 
$\psi$. 
\begin{theorem}\label{t:shi}
Suppose that Assumptions  \ref{as:1}, \ref{giathiet} and \ref{giathiet3} are satisfied. Choose $M'>\dfrac{2\|K\|}{\alpha}$, denote $\alpha'=\alpha-\frac{2\|K\|}{M'}$, $\tau=1+\gamma \alpha'$, $\eta=4\gamma^2 L_1$, $\sum \limits_{k=1}^{m} \tau^{k-1}=\delta$.
  Assume that 
  \begin{align}0< \gamma< \min \big \{\dfrac{1}{2\|K\|M'+\mu_0},\ \dfrac{-1+\sqrt{1+\frac{\alpha'}{4L_1}}}{\alpha'} \big \} \label{dk1gama} \end{align}
  Let $m$ is a positive integer such that \begin{align} m> \dfrac{\ln \lambda}{\ln \tau}\ \ \text{where} \ \lambda=\dfrac{\gamma-\eta \tau}{\eta}. \end{align}
 Let $(\bar x_s)_{s \in \NN}, \ (\bar v_s)_{s \in \NN}$ be sequences generated by Algorithm \ref{alg3} with $\theta= 0$, $\omega_k = \tau^{k-1}/\delta$.
Then, we have
\begin{align}
    \E [G(\bar x_s,v^\star)-G(x^\star,\bar v_s] \le \dfrac{ \lambda^{-s}}{\eta \delta} [D(\xx^\star,\bar \xx_0)+\eta(1+\delta)(G(\bar x_0,v^\star)-G(x^\star,\bar v_0))]. \label{kq2}
\end{align}
\end{theorem}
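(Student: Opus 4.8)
The plan is to specialize Lemma~\ref{vietlai}\ref{vietlai:i} to the saddle point and turn the resulting one-step inequality into a contraction for a suitable Lyapunov functional. Throughout write $T_k=G(x_k,v^\star)-G(x^\star,v_k)$ and $\bar T_s=G(\bar x_s,v^\star)-G(x^\star,\bar v_s)$, both nonnegative by Assumption~\ref{as:1}, and abbreviate $D=D_{\varphi\oplus\psi}$. First I would apply \eqref{e:suachua1} with $(x,v)=(x^\star,v^\star)$ and $M=M'$, so that the two multiples of $D(\xx^\star,\xx_{k+1})$ collapse into $-\alpha' D(\xx^\star,\xx_{k+1})$ with $\alpha'=\alpha-2\|K\|/M'$. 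Taking the conditional expectation $\E_{\xi_k}$ annihilates the inner-product term, since $\hat\xx_{k+1}$ in \eqref{e:mas1} is built from the \emph{exact} gradients and is thus measurable with respect to $\xi_{[k-1]}$, while $\E_{\xi_k}\rr_k=\mathsf{R}_k$ by \eqref{e:unbiased1}. For the variance I would add the two lines of \eqref{e:var1} (valid since $\theta=0$) to obtain $\E_{\xi_k}\|\rr_k-\mathsf{R}_k\|^2\le 4L_1(T_k+\bar T_s)$. Finally the hypothesis $\gamma<1/(2\|K\|M'+\mu_0)$ from \eqref{dk1gama} makes the coefficient $2\|K\|M'+\mu_0-1/\gamma$ of $D(\xx_{k+1},\xx_k)$ negative, so that term may be discarded. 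Multiplying through by $\gamma$ and recalling $\tau=1+\gamma\alpha'$ and $\eta=4\gamma^2L_1$, taking full expectations gives the one-step recursion
\[
\gamma\,\E T_{k+1}+\tau\,\E D(\xx^\star,\xx_{k+1})\le \E D(\xx^\star,\xx_k)+\eta\,\E T_k+\eta\,\E\bar T_s.
\]

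Next I would introduce the geometric weights $\tau^k$. Multiplying the displayed recursion by $\tau^k$ and summing over $k=0,\dots,m-1$, the $D$-terms telescope to $\tau^m\E D(\xx^\star,\xx_m)-\E D(\xx^\star,\xx_0)$, while the two weighted $T$-sums regroup by their index shift: the interior indices $1\le k\le m-1$ acquire the coefficient $\tau^{k-1}(\gamma-\eta\tau)=\eta\lambda\,\tau^{k-1}$, leaving the boundary contributions $\gamma\tau^{m-1}T_m$ and $-\eta T_0$. Since $\bar\xx_{s+1}=\sum_{k=1}^m\omega_k\xx_k$ with $\omega_k=\tau^{k-1}/\delta$, the convexity--concavity of $G$ yields $\delta\,\bar T_{s+1}\le\sum_{k=1}^m\tau^{k-1}T_k=\big(\sum_{k=1}^{m-1}\tau^{k-1}T_k\big)+\tau^{m-1}T_m$, which I would use to bound the interior sum from below. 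Invoking the two identities $\gamma-\eta\tau=\eta\lambda$ and $\gamma-\eta\lambda=\eta\tau$, the boundary terms recombine into $\eta\tau^m T_m$, and one arrives at the clean stage inequality
\[
\eta\lambda\delta\,\E\bar T_{s+1}+\eta\tau^m\,\E T_m+\tau^m\,\E D(\xx^\star,\xx_m)\le \E D(\xx^\star,\xx_0)+\eta\,\E T_0+\eta\delta\,\E\bar T_s.
\]

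To iterate over stages I would use the reinitialization $x_0^{s+1}=x_m$, $v_0^{s+1}=v_m$ of Algorithm~\ref{alg3}, so that $\E D(\xx^\star,\xx_m)=\E D(\xx^\star,\xx_0^{s+1})$ and $\E T_m=\E T_0^{s+1}$, while $\xx_0=\xx_0^s$ and $\bar T_s$ is carried from the previous stage. Defining
\[
V_s=D(\xx^\star,\xx_0^s)+\eta\,T_0^s+\eta\delta\,\bar T_s\ge 0,
\]
the stage inequality reads $\tau^m\E D(\xx^\star,\xx_0^{s+1})+\eta\tau^m\E T_0^{s+1}+\eta\lambda\delta\E\bar T_{s+1}\le\E V_s$. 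The choice $m>\ln\lambda/\ln\tau$ is precisely $\tau^m\ge\lambda$, so term by term the left-hand side dominates $\lambda\,\E V_{s+1}$ (with equality on the $\bar T_{s+1}$-coefficient); hence $\E V_{s+1}\le\lambda^{-1}\E V_s$ and, by induction, $\E V_s\le\lambda^{-s}V_0$ with $V_0=D(\xx^\star,\bar\xx_0)+\eta(1+\delta)\bar T_0$ (using $x_0^0=\bar x_0$, $v_0^0=\bar v_0$). Discarding the nonnegative terms $D(\xx^\star,\xx_0^s)$ and $\eta T_0^s$ from $V_s$ leaves $\eta\delta\,\bar T_s\le V_s$, and dividing by $\eta\delta$ yields exactly \eqref{kq2}.

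The only genuinely delicate part is the index bookkeeping in the second step: the weighted $T$-sums must reassemble so that the left-hand coefficients of $D(\xx^\star,\xx_0^{s+1})$, $T_0^{s+1}$ and $\bar T_{s+1}$ are each at least $\lambda$ times their counterparts in $V_{s+1}$. This closes only because of the exact identities $\gamma-\eta\tau=\eta\lambda$ and $\gamma-\eta\lambda=\eta\tau$ together with $\tau^m\ge\lambda$; the stepsize bound \eqref{dk1gama} is then a routine computation guaranteeing in addition that $\lambda>1$, so that $\lambda^{-s}\to 0$ and the convergence is genuinely linear.
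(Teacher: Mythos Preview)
Your proposal is correct and follows essentially the same route as the paper: specialize \eqref{e:suachua1} at $(x^\star,v^\star)$ with $M=M'$, take conditional expectations (using measurability of $\hat\xx_{k+1}$ and \eqref{e:unbiased1} to kill the inner-product term, and \eqref{e:var1} for the variance), multiply by $\tau^k$, telescope, apply convexity--concavity of $G$, and finally use $\tau^m>\lambda$ to get the contraction $\E V_{s+1}\le\lambda^{-1}\E V_s$. The only cosmetic difference is that the paper first replaces $\tau^m$ by $\lambda$ and then bounds the full sum $\sum_{k=1}^m\tau^{k-1}T_k\ge\delta\bar T_{s+1}$, whereas you bound the partial sum $\sum_{k=1}^{m-1}$ first and recombine the $T_m$ boundary term via $\gamma-\eta\lambda=\eta\tau$; the two computations are algebraically equivalent.
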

\begin{proof}
By our assumption, $\alpha'=\alpha-2\|K\|/M'> 0$. Hence, we can rewrite \eqref{e:suachua1} with 
$M=M'$ as
\begin{align}
\gamma  [G(x_{k+1},v^\star)&-G(x^\star,v_{k+1})]\notag \\
&\le  D(\xx^\star,\xx_{k})-(1+{\gamma \alpha'})D(\xx^\star,\xx_{k+1})-(1-2\gamma \|K\| M'-\gamma \mu_0) D(\xx_{k+1},\xx_k) \notag \\
&+\gamma^2 \|\rr_k-\mathsf{R}_k\|^2+ \gamma  \pair{\hat \xx_{k+1}-\xx}{\mathsf{R}_k-\rr_k}. \label{dgap2d1}
\end{align}
 Taking the conditional expectation with respect to $\xi_{[k-1]}$ both sides of \eqref{dgap2d1}, using the condition \eqref{dk1gama} and Corollary \ref{lm1},  we get:
 \begin{align}
     \gamma \E_{\xi_k}[G(x_{k+1},v^\star)&-G(x^\star,v_{k+1})] \le  D(\xx^\star,\xx_{k})-(1+\gamma \alpha')\E _{\xi_k}D(\xx^\star,\xx_{k+1}) \notag \\
     &\ \  +4\gamma^2 L_1 \bigg([G(x_k,v^\star)-G(x^\star,v_k)]+[G(\bar x_s,v^\star)-G(x^\star,\bar v_s)]\bigg).
     \label{inegd}
 \end{align}
 Denote $T^s_k:=T_k=G(x_k,v^\star)-G(x^\star,v_k),\ \bar T_s=G(\bar x_s,v^\star)-G(x^\star,\bar v_s)$,
 $d^s_k:=d_k:=D(\xx^\star,\xx_k)$.\\
 We have:
 \begin{align}
   (1+\gamma \alpha')\E_{\xi_k} d_{k+1} \le d_k-\gamma \E_{\xi_k} T_{k+1}+4\gamma^2L_1(T_k+\bar T_s). \label{inegd2} 
 \end{align}
Take expectation on both sides of \eqref{inegd2}, we obtain
 \begin{align}
     \tau \E d_{k+1} \le \E d_k-\gamma \E T_{k+1}+\eta \E T_k+\eta \bar T_s.
 \end{align} 
  Multiplying both sides by $\tau^k$
and telescoping from $k = 0$ to $m - 1$, we derive
\begin{align}
    \tau^m \E d_m \le d_0+\eta T_0+\eta \sum_{k=1}^{m-1} \tau^k \E T_k+\eta \bar T_s \sum_{k=0}^{m-1} \tau^k-\gamma \sum_{k=1}^m \tau^{k-1} \E T_k,
\end{align}
which implies
\begin{align}
    \tau^m \E (d_m+\eta T_m)+(\gamma -\eta \tau) \E \sum_{k=1}^m \tau^{k-1} T_k \le d_0+\eta T_0+\eta \sum_{k=1}^m \tau^{k-1} \bar T_s.
\end{align}
From the conditions of $m$ and $\gamma$, we have $\tau^m > \dfrac{\gamma-\eta\tau}{\eta}$, so we obtain:
\begin{align}\label{inegd3}
    \dfrac{\gamma-\eta \tau}{\eta}\E (d_m+\eta T_m)+(\gamma -\eta \tau) \E \sum_{k=1}^m \tau^{k-1} T_k \le d_0+\eta T_0+\eta  \sum_{k=1}^m \tau^{k-1} \bar T_s.
\end{align}
From $G$ is convex-concave, \eqref{inegd3} implies
\begin{align}
    \E [d^{s+1}_0+\eta T^{s+1}_0+\eta \delta \bar T_{s+1}] \le \dfrac{\eta}{\gamma-\eta \tau} [d^s_0+\eta T^s_0+\eta  \delta \bar T_s].
\end{align}
Therefore
\begin{align}
   \E [D(\xx^\star,\xx^{s+1}_0]+\eta (G(x^{s+1}_0,v^\star)&-G(x^\star,v^{s+1}_0))+\eta \delta \bar T_{s+1}] \notag \\
   &\le \dfrac{\eta }{\gamma-\eta  \tau} [D(\xx^\star,\xx^s_0)+\eta (G(x^{s}_0,v^\star)-G(x^\star,v^{s}_0)) +\eta  \delta \bar T_s].
\end{align}
Hence,
\begin{align}
    \eta \delta \E [G(\bar x_s,v^\star)-G(x^\star,\bar v_s] \le \lambda^{-s} [D(\xx^\star,\bar \xx_0)+\eta(1+\delta)(G(\bar x_0,v^\star)-G(x^\star,\bar v_0))].
\end{align}
The proof is completed.
 \end{proof}
\begin{remark} Here are some remarks.
\item[(i)] The choice of $(\omega_k)_{0\leq k\leq m}$ is the same in \cite{Shi17}. However, the resulting algorithm 
is full-splitting and hence it is different from \cite{Shi17}.  Theorem \ref{t:shi} can be viewed a development of \cite[Theorem 1]{Shi17} for our splitting method.
\item[(ii)]  In \eqref{kq2}, $\lambda$ is greater than 1, indeed: $$\lambda>1 \Leftrightarrow \dfrac{1-4\gamma L_1 \tau}{4\gamma L_1}>1 \Leftrightarrow \gamma (\tau+1)<\dfrac{1}{4L_1} \Leftrightarrow \gamma(2+\gamma \alpha')<\dfrac{1}{4L_1}, $$
which is satisfied by \eqref{dk1gama}.
\end{remark}

\begin{remark} Here are some connections to existing works where variance reduction techniques are used.
\begin{enumerate}
    \item Linear convergence in expectation of the primal-dual gap was established in \cite{Shi17} for a different stochastic variance reduce algorithm. 
    \item The authors in \cite{Bach1}
also proposed stochastic variance reduce algorithm for saddle point problems with the linear convergence in expectation of the iterates.
\item In the context of solving empirical composition optimization problem, the
stochastic variance reduced primal dual algorithms with the Euclidean norms  were in \cite{Devraj19} in which the linear convergences of the iterates in expectation were achieved.
\item For a special case of Problem \ref{prob:1} where $\ell =0$ and $f=0$, under additional assumption on the linear operator $L$,  the method proposed in \cite{Du19} with the Euclidean distances achieves the linear convergence rate even when the strongly convex-concave condition is not full-filled.
\end{enumerate}

\end{remark}



\maketitle

\end{document}